\DeclareMathOperator{\I}{I}%
\DeclareMathOperator{\Law}{Law}%
\def\E{\hskip.15ex\mathsf{E}\hskip.10ex}
\def\P{\mathsf{P}}
\def\eps{\varepsilon}
\def\phi{\varphi}
\newtheorem{Theorem} {Theorem}[section]
\newtheorem{Lemma}[Theorem] {Lemma}
\theoremstyle{definition}\newtheorem{Example}[Theorem]{Example}
\theoremstyle{definition}\newtheorem{Remark}[Theorem]{Remark}
\theoremstyle{definition}
\numberwithin{equation}{section}
\renewcommand{\ge}{\geqslant}
\renewcommand{\le}{\leqslant}
\newcommand{\sbt}{\,\begin{picture}(-1,1)(-1,-3)\circle*{1.5}\end{picture}\ }
\renewcommand*{\@fnsymbol}[1]{\ensuremath{\ifcase#1\or *\or   \ddagger\or
   \mathsection\or \mathparagraph\or \|\or **\or \dagger\dagger
   \or \ddagger\ddagger \else\@ctrerr\fi}}
\title{On ergodic properties of nonlinear Markov chains and stochastic McKean--Vlasov equations}
\date{November 25, 2013}
\author{Oleg Butkovsky\thanks{Technion - Israel Institute of Technology,
Faculty of Industrial Engineering and Management, Haifa, 32000, Israel.}$\,\,{}^,$\thanks{
Lomonosov Moscow State University, Faculty of Mathematics and Mechanics, Department of Probability Theory, Moscow, 119991, Russia.}
${}^,$\thanks{Email: oleg.butkovskiy@gmail.com.}$\,\,{}^,$\thanks{Supported in part by Russian Foundation for Basic Research Grant 10-01-00397-a, Israel Science Foundation Grant 497/10, and a Technion fellowship.}}
\begin{document}
\maketitle

\begin{abstract}
We study ergodic properties of nonlinear Markov chains and stochastic McKean--Vlasov equations. For
nonlinear Markov chains we obtain sufficient conditions for existence and uniqueness of an invariant measure and uniform ergodicity. We also prove optimality of these conditions. For stochastic McKean--Vlasov equations we establish exponential convergence of their solutions to stationarity in the total variation metric under Veretennikov--Khasminskii-type conditions.

\smallskip
\noindent \textbf{Keywords:} nonlinear Markov processes, stochastic McKean--Vlasov equations, Dobrushin's
condition, invariant measures, exponential convergence.

\smallskip
\noindent \textbf{AMS 2010 subject classifications:} 60H10, 47H20, 60J60, 35Q83.

\end{abstract}

\section{Introduction}

In this paper we investigate ergodic properties of nonlinear Markov processes with discrete time (nonlinear
Markov chains) and ergodic properties of solutions of nonlinear stochastic differential equations (stochastic
McKean--Vlasov equations).

Recall that nonlinear Markov processes are stochastic processes whose transition functions may depend not only
on the current \textit{state} of the process but also on the current \textit{distribution} of the process. These
processes were introduced by H.P.~McKean \cite{Mc} to model plasma dynamics. Later nonlinear Markov processes
were studied by a number of authors, let us mention here the books of V.N.~Kolokoltsov \cite{Kol} and of
A.-S.~Sznitman \cite{Szn}. These processes arise naturally in the study of the limit behavior of a large number
of weakly interacting Markov processes (\cite{CGM}, \cite{Ganz}, \cite{Ver}) and have a wide range of
applications, including financial mathematics, population dynamics, neuroscience (see, e.g., \cite{Fr} and the
references therein).

As shown in \cite{MV}, nonlinear Markov processes may have peculiar ergodic properties. For instance, an
irreducible nonlinear Markov chain may have infinitely many invariant measures. Clearly, for standard
homogeneous Markov chains this is impossible (\cite[Proposition~10.1.1 and Theorem~10.4.9]{MT}).

This paper extends the recent work of the author \cite{myself} and consists of two parts. The first part of the
paper (Section~\ref{sect:2}) is devoted to ergodic properties of nonlinear Markov chains. We establish
sufficient conditions for existence and uniqueness of an invariant measure and uniform ergodicity of a nonlinear
Markov process. These conditions are optimal in a certain sense. It is interesting to note that in contrast to
the Markovian case, positivity of the elements of the one-step transition matrix does not imply even weak
convergence to the invariant measure, see Example~\ref{ex:remove} below.

The second part of the paper (Section~\ref{sect:3}) deals with stochastic McKean--Vlasov equations (SMVEs).
Recall that SMVE is a stochastic differential equation (SDE) whose drift and diffusion coefficients may depend
on the current distribution of the process. To study convergence of solutions of SMVE to an invariant measure
one usually considers associated nonlinear partial differential equation or investigates the mean-field limit. Using
and developing these ideas, P.~Cattiaux, A.~Guillin and F.~Malrieu (\cite{CGM}) and A.~Ganz (\cite{Ganz})
estimated convergence rate of strong solutions of SMVE to an invariant measure in the Wasserstein metric.
However these methods can not be applied to obtain such estimates in the total variation metric (recall that
this metric is stronger than the Wasserstein metric). To study convergence in total variation we
develop a new approach. This approach uses the ideas of M.~Hairer and J.~Mattingly \cite{HM}.

\section{Convergence of nonlinear Markov chains}\label{sect:2}

First of all, let us introduce some notation. We assume that all random objects are defined on a common
probability space $(\Omega,\mathcal{F},\P)$. Consider a measurable space $(E,\mathcal{E})$ and let $\mathcal
P(E)$ be the class of all probability measures on this space. The Dirac delta measure concentrated at a point $x\in
E$ is denoted by $\delta_x$.

Let $X=\left(X^\mu_n\right)_{n\in\mathbb{Z}_+}$ be a nonlinear Markov process with the state space
$(E,\mathcal{E})$, initial distribution $\Law(X^\mu_0)=\mu$, $\mu\in \mathcal P(E)$ and transition probabilities
\begin{equation*}
\P\bigl(X^\mu_{n+1}\in B|X^\mu_n=x\bigr)=P_{\mu_n}(x,B),\quad n\in\mathbb{Z}_+,
\end{equation*}
where $x\in E$, $B\in\mathcal{E}$, $n\in\mathbb{Z}_+$ and $\mu_n:=\Law(X^\mu_n)$. Further, by $\Law_{\mathsf{Q}}
\xi$ we denote the distribution of the random vector $\xi$ under the measure $\mathsf{Q}$. If $\mathsf{Q}=\P$,
then the subscript is omitted.

Note that if the function $P_\nu(x,B)$ does not depend on the measure $\nu$, then the process $X$ is
Markov (in this case the transition probability of $X$ is denoted by $P(x,B)$ and the argument $\nu$ is
dropped).

For probability measures $\mu,\nu\in\mathcal{P}(E)$ and a measurable function $f\colon E\to[0,\infty)$,
introduce the \textit{weighted total variation distance} by the following formula:
\begin{equation*}
d_{f}(\mu,\nu):=\sup_ {g:\,\, |g|\le f}\int_E g(x)(\mu(dx)-\nu(dx)).
\end{equation*}
In particular, if the function $f$ is identically equal to $1$, then the weighted total variation distance
coincides with the (unweighted)\textit{ total variation distance}; the latter is denoted by $d_{TV}$:
\begin{equation*}
d_{TV}(\mu,\nu):=2\sup_{A\in\mathcal{B}(E)} |\mu(A)-\nu(A))|, \quad\mu,\nu\in\mathcal{P}(E).
\end{equation*}

For a transition kernel $Q\colon E\times \mathcal{E}\to [0,1]$, a measurable function $\phi\colon
E\to\mathbb{R}$, and a probability measure $\nu\in\mathcal{P}(E)$, define
\begin{equation*}
Q\phi(x):=\int_E \phi(t) \,Q(x,dt);\quad Q \nu(dx):=\int_E Q(t,dx)\,\nu(dt);\quad \nu(\phi):=\int_E
\phi(t)\,\nu(dt).
\end{equation*}

We say that a transition kernel $Q$ satisfies the \textit{local Dobrushin condition} on a set $A\subset E$ if
there exists $\alpha>0$ such that for any $x,y\in A$
\begin{equation}\label{osnMP}
d_{TV} (Q(x,\cdot),Q(y,\cdot))\le 2(1-\alpha).
\end{equation}
If the kernel $Q$ satisfies the local Dobrushin condition on the whole space $E$, then we say that $Q$ satisfies
the \textit{global} Dobrushin condition.

A process $X$ is called \textit{uniformly ergodic} (see, e.g., \cite[p.~393]{MT}) if it has a stationary
distribution $\pi$ and there exist $C>0$, $\theta>0$ such that
\begin{equation*}
\sup_{\mu\in\mathcal P(E)} d_{TV}(\mu_n,\pi)\le C e^{-\theta n},\quad n\in\mathbb{Z}_+.
\end{equation*}
Recall that we denoted $\mu_n=\Law(X^\mu_n)$.

If the process $X$ is Markov, then the global Dobrushin condition is sufficient for the existence and uniqueness
of an invariant measure (\cite{Dobr}). Moreover, this condition implies uniform ergodicity of $X$ and guarantees
the following convergence rate: for any $\mu,\nu\in \mathcal P(E)$ one has \vspace{-1ex}
\begin{equation}\label{convrateMP}
d_{TV}(\mu_n,\nu_n)\le 2 (1-\alpha)^n,\quad n\in\mathbb{Z}_+.
\end{equation}

The following condition is a natural generalization of the global Dobrushin condition for nonlinear Markov
processes: there exists $\alpha>0$ such that for any $x,y\in E$
\begin{equation}\label{osnNMP}
\sup\limits_{\mu,\nu\in\mathcal P(E)}d_{TV} \bigl(P_\mu(x,\cdot),P_\nu(y,\cdot)\bigr)\le 2(1-\alpha).
\end{equation}
However, it turns out that in contrast to the Markov case, for any $0<\alpha<1$ this condition may be
insufficient even for a weak convergence of $\mu_n$ to the stationary measure. Let us give a corresponding
example.

\begin{Example}\label{ex:remove}
Let $X$ be a nonlinear Markov chain taking values in the state space $(E,\mathcal{E})=(\{1,2\},2^{\{1,2\}})$.
Define the transition probability matrix of the chain by the following formula:
\begin{equation*}
{\boldsymbol{P}_\nu} =
\begin{pmatrix}
 \bigl(\nu(\{2\})\wedge(1-\gamma/2)\bigr)\vee \gamma/2& \,\,\,\,\bigl(\nu(\{1\})\wedge(1-\gamma/2)\bigr)\vee \gamma/2\\[1.5ex]
 \bigl(\nu(\{2\})\wedge(1-\gamma/2)\bigr)\vee \gamma/2& \,\,\,\,\bigl(\nu(\{1\})\wedge(1-\gamma/2)\bigr)\vee \gamma/2
\end{pmatrix}
\,,
\end{equation*}
where $\gamma\in(0,1)$. Here, as usual, $a\wedge b=\min\{a,b\}$ and ${a\vee b=\max}\{a,b\}$ for real $a,b$. It
is clear that this nonlinear Markov chain satisfies condition \eqref{osnNMP} with $\alpha=\gamma$ and has the
stationary distribution $\pi:=(\delta_1+\delta_2)/2$. On other hand, for any $a\in[\gamma/2, 1-\gamma/2]$,
$a\neq1/2$, and initial distribution $\mu_0 (a):=a \delta_1  + (1-a) \delta_2$ the measure $\mu_n=\mu_n(a)$ does
not converge to $\pi$ as $n\to\infty$. Indeed, if $n$ is even, then $\mu_n(a)=a\delta_1 + (1-a) \delta_2$,
whereas for odd $n$ one has $\mu_n(a)=(1-a) \delta_1  + a \delta_2$.
\end{Example}

Thus, the global Dobrushin condition \eqref{osnNMP} does not guarantee uniform ergodicity of a nonlinear Markov
process. Furthermore, as explained below, \eqref{osnNMP} does not imply even existence of a stationary measure.
Let us show how this condition can be extended in such a way that the new condition is sufficient for uniform
ergodicity.

\begin{Theorem}\label{Th:DiscreteTime}
Assume that the process $X$ satisfies condition \eqref{osnNMP} for some $\alpha>0$.

\textbf{$($i$)$} If there exists $\lambda\in[0,\alpha]$ such that for all $x\in E$ and $\mu,\nu\in\mathcal P(E)$
one has
\begin{equation}\label{cond2}
d_{TV}\bigl(P_\mu(x,\cdot),P_\nu(x,\cdot)\bigr)\le\lambda\;\! d_{TV}(\mu,\nu),
\end{equation}
then the process $X$ has a unique invariant measure $\pi$. Moreover, if $\lambda<\alpha$, then for any measure $\mu\in\mathcal{P}(E)$
\begin{equation}\label{convrate_fast}
d_{TV}(\mu_n,\pi)\le2 (1-(\alpha-\lambda))^n,\quad n\in\mathbb{Z}_+,
\end{equation}
and if $\lambda=\alpha$, then
\begin{equation}\label{convrate_slow}
d_{TV}(\mu_n,\pi)\le 2/(\lambda n),\quad n\in\mathbb{Z}_+.
\end{equation}
\vskip1ex \textbf{$($ii$)$} The restriction $\lambda\in[0,\alpha]$ in condition \eqref{cond2} is optimal.
Namely, for any pair $(\alpha,\lambda)$ such that ${0<\alpha<\lambda\le 1}$  there exist processes $X=\!\left(X_n\right)_{n\in\mathbb{Z}_+}$,
$Y=\!\left(Y_n\right)_{n\in\mathbb{Z}_+}$, $Z=\!\left(Z_n\right)_{n\in\mathbb{Z}_+ }$, that satisfy conditions
\eqref{osnNMP} and \eqref{cond2}, and measure $\mu\in\mathcal P(E)$, such that the process $X$ has more than
one stationary measure, the process $Y$ has no stationary measures, $d_{TV}(Z_n^\mu,\pi)\not\to 0$ as
$n\to\infty$.

\end{Theorem}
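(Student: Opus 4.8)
\emph{Part (i).} The plan is to reduce the nonlinear recursion to a contraction-type estimate for the map $\mu \mapsto \mu_{n+1}$ on $\mathcal{P}(E)$ equipped with $d_{TV}$. The key observation is that $\mu_{n+1} = \int_E P_{\mu_n}(x,\cdot)\,\mu_n(dx)$, i.e.\ $\mu_{n+1} = P_{\mu_n}\mu_n$. Given two initial measures $\mu,\nu$, I would estimate $d_{TV}(\mu_{n+1},\nu_{n+1}) = d_{TV}(P_{\mu_n}\mu_n, P_{\nu_n}\nu_n)$ by inserting the intermediate term $P_{\mu_n}\nu_n$ and applying the triangle inequality:
\begin{equation*}
d_{TV}(P_{\mu_n}\mu_n, P_{\nu_n}\nu_n) \le d_{TV}(P_{\mu_n}\mu_n, P_{\mu_n}\nu_n) + d_{TV}(P_{\mu_n}\nu_n, P_{\nu_n}\nu_n).
\end{equation*}
For the first term, the kernel $P_{\mu_n}$ is \emph{fixed} and satisfies the global Dobrushin condition with constant $\alpha$ (this follows from \eqref{osnNMP} by taking $\mu=\nu=\mu_n$ there), so the standard Markov contraction \eqref{convrateMP} gives a one-step factor $(1-\alpha)$: $d_{TV}(P_{\mu_n}\mu_n, P_{\mu_n}\nu_n) \le (1-\alpha)\,d_{TV}(\mu_n,\nu_n)$. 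For the second term, integrating \eqref{cond2} against $\nu_n$ yields $d_{TV}(P_{\mu_n}\nu_n, P_{\nu_n}\nu_n) \le \lambda\, d_{TV}(\mu_n,\nu_n)$. Combining, $d_{TV}(\mu_{n+1},\nu_{n+1}) \le (1-\alpha+\lambda)\,d_{TV}(\mu_n,\nu_n)$.

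When $\lambda<\alpha$ the coefficient $\beta := 1-(\alpha-\lambda)$ is strictly less than $1$, so $\mu\mapsto \mu_1$ is a contraction on the complete metric space $(\mathcal{P}(E),d_{TV})$; Banach's fixed point theorem gives a unique invariant $\pi$, and iterating the bound with $\nu=\pi$ gives $d_{TV}(\mu_n,\pi)\le \beta^n d_{TV}(\mu_0,\pi)\le 2\beta^n$, which is \eqref{convrate_fast}. The case $\lambda=\alpha$ is the delicate one: here we only get $d_{TV}(\mu_{n+1},\nu_{n+1})\le d_{TV}(\mu_n,\nu_n)$, i.e.\ nonexpansiveness, which is not enough for a fixed point by itself. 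I would instead run the estimate more carefully: one step applies \emph{either} the Dobrushin factor or the Lipschitz factor, but the inequality $d_{TV}(P_{\mu_n}\nu_n,P_{\nu_n}\nu_n)\le\lambda\,d_{TV}(\mu_n,\nu_n)$ is really an estimate on how far apart the kernels are, while the Dobrushin step contracts. A cleaner route: let $a_n := d_{TV}(\mu_n,\pi)$ where $\pi=(\delta_1+\cdots)$... actually one should first establish existence of $\pi$ separately (e.g.\ by a compactness/Schauder argument on $\mathcal P(E)$, or by showing $(\mu_n)$ is Cauchy using a summed version of the bound), then derive the rate. To get the $2/(\lambda n)$ rate I expect one needs a sharper one-step inequality of the form
\begin{equation*}
a_{n+1}\le a_n - c\,a_n^2
\end{equation*}
for a suitable constant $c$ comparable to $\lambda$; this "$a_{n+1}\le a_n(1-c a_n)$" recursion is exactly what produces an $O(1/n)$ decay (compare $1/a_{n+1}\ge 1/a_n + c$). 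Obtaining such a quadratic gain requires exploiting that when $d_{TV}(\mu_n,\pi)$ is small, the kernels $P_{\mu_n}$ and $P_\pi$ are close (by \eqref{cond2}, within $\lambda a_n$), so the loss term in the triangle inequality is itself of order $a_n\cdot a_n$ rather than order $a_n$. Making this precise — carefully bookkeeping the two error contributions so that their product, not their sum, governs — is the main obstacle.

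\emph{Part (ii).} This is a construction, not an estimate. For a given pair $0<\alpha<\lambda\le 1$ I would work on the two-point space $(\{1,2\},2^{\{1,2\}})$ as in Example~\ref{ex:remove} and design transition matrices ${\boldsymbol P}_\nu$ that are affine in the coordinate $\nu(\{1\})\in[0,1]$, truncated into $[\alpha,1-\alpha]$ (say) to enforce \eqref{osnNMP} with constant $\alpha$, and with slope of magnitude $\lambda$ to make \eqref{cond2} tight. Writing $\mu_n=(p_n,1-p_n)$, the dynamics reduces to a scalar one-dimensional map $p_{n+1}=F(p_n)$ with $F$ piecewise affine; the slope of $F$ at a relevant fixed point is controlled by $\lambda-\alpha>0$ (roughly, "Lipschitz gain minus Dobrushin contraction"), hence exceeds $1$ in absolute value. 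For $Y$ (no stationary measure) I would arrange $F$ to have \emph{no} fixed point in $[0,1]$ — e.g.\ by a suitable sign choice so that $F(p)>p$ throughout, with the truncation preventing escape, forcing $p_n$ to march monotonically to the boundary without ever being fixed (or oscillate between boundary-related values); for $X$ (multiple stationary measures) choose $F$ affine with slope $>1$ and two crossings of the diagonal, as in \cite{MV}; for $Z$ ($d_{TV}(Z_n^\mu,\pi)\not\to0$) use a repelling fixed point $\pi$ so trajectories started off $\pi$ diverge from it. The only real work is checking that the chosen matrices have all entries in $[0,1]$ and genuinely satisfy both \eqref{osnNMP} with the prescribed $\alpha$ and \eqref{cond2} with the prescribed $\lambda$; these are elementary one-variable verifications, so I anticipate no serious obstacle in part (ii) beyond choosing the truncation levels correctly.
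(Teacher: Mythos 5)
Your Part (i) argument via the triangle inequality
\begin{equation*}
d_{TV}(P_{\mu_n}\mu_n, P_{\nu_n}\nu_n)\le d_{TV}(P_{\mu_n}\mu_n, P_{\mu_n}\nu_n)+d_{TV}(P_{\mu_n}\nu_n, P_{\nu_n}\nu_n)\le (1-\alpha+\lambda)\,d_{TV}(\mu_n,\nu_n)
\end{equation*}
is correct, and it does give existence, uniqueness and the exponential rate by Banach's fixed point theorem when $\lambda<\alpha$. But, as you yourself suspect, it is irreparably too weak for $\lambda=\alpha$: you only get nonexpansiveness. You correctly guess that a quadratic self-improvement of the form $a_{n+1}\le a_n-ca_n^2$ is what produces the $O(1/n)$ decay, but your proposed mechanism --- ``when $\mu_n$ is near $\pi$, the kernels $P_{\mu_n},P_{\pi}$ are close, so the loss term is $O(a_n^2)$'' --- does not survive inspection: in your decomposition the second term $d_{TV}(P_{\mu_n}\nu_n,P_{\nu_n}\nu_n)$ is genuinely of order $\lambda a_n$ (it is the Lipschitz bound applied to the \emph{entire} mass of $\nu_n$), not $\lambda a_n^2$. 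The paper instead splits the pair of measures themselves, not the kernel: with $\eta(dx):=(d\mu/d\nu\wedge 1)\,\nu(dx)$ one has
\begin{equation*}
d_{TV}(P_\mu\mu,P_\nu\nu)\le d_{TV}(P_\mu\eta,P_\nu\eta)+d_{TV}\bigl(P_\mu(\mu-\eta),P_\nu(\nu-\eta)\bigr).
\end{equation*}
The Lipschitz condition \eqref{cond2} is applied only on the \emph{common} part $\eta$, whose mass is $1-d_{TV}(\mu,\nu)/2$, yielding $\lambda\, d_{TV}(\mu,\nu)\bigl(1-d_{TV}(\mu,\nu)/2\bigr)$; the Dobrushin condition \eqref{osnNMP} applied to the disjoint remainders, each of mass $d_{TV}(\mu,\nu)/2$, yields $(1-\alpha)\,d_{TV}(\mu,\nu)$. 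Adding gives exactly the quadratic-gain inequality \eqref{szhatie}. It is this different decomposition, not a local perturbation argument near $\pi$, that produces the $-\lambda\, d_{TV}(\mu,\nu)^2/2$ term, and it also delivers existence (Cauchy property of $(\mu_n)$ in $d_{TV}$) without invoking Banach, covering $\lambda=\alpha$.

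Your Part (ii) has a genuine error in the construction of $Y$. On the two-point state space the nonlinear transition map reduces to a continuous map $F\colon[0,1]\to[0,1]$ (continuity is forced by \eqref{cond2}), and any such map has a fixed point by the intermediate value theorem; more generally on any finite state space the map $\nu\mapsto P_\nu\nu$ is a continuous self-map of a compact convex simplex and therefore has a fixed point by Brouwer. So no choice of truncation or sign on $\{1,2\}$ can produce a chain with \emph{no} stationary measure. The paper makes this point explicitly and is forced to build $Y$ on the infinite state space $\mathbb{N}$, where the relevant ``escape to infinity'' is actually possible. Your constructions of $X$ (continuum of stationary measures) and $Z$ (nonconvergence) on $\{1,2\}$ are in the right spirit and match the paper's, though you would still need to exhibit concrete truncated affine matrices and check both \eqref{osnNMP} and \eqref{cond2}, which the paper carries out.
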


The proof of Theorem \ref{Th:DiscreteTime} is given in Section \ref{S:proofs}.

\begin{Remark}\label{rm:M}
If the process $X$ is Markov, then condition \eqref{osnNMP} coincides with \eqref{osnMP}, and condition
\eqref{cond2} is  satisfied with $\lambda=0$. Moreover, the rate of convergence provided by
\eqref{convrate_fast} coincides with the corresponding rate of convergence for Markov processes, as formula
\eqref{convrateMP} shows. Thus, Theorem~\ref{Th:DiscreteTime} extends the classical result of Dobrushin
\cite{Dobr}.
\end{Remark}

Now we pass on to the study of nonlinear processes that do not satisfy the  \textit{global} Dobrushin condition,
but satisfy the \textit{local} Dobrushin condition on a certain ``good'' set.

\section{McKean--Vlasov equation with small perturbation}\label{sect:3}

Consider a multidimensional stochastic McKean--Vlasov equation in $\mathbb{R}^d$, $d\ge1$
\begin{equation}\label{SMV}
\left\{\begin{array}{l}
X_t=X_0+\int_0^t b(X_s,\mu_s)\,ds+W_t,\quad t\ge0,\\
\Law(X_t)=\mu_t,
\end{array}\right.
\end{equation}
where $b\colon \mathbb{R}^d\times \mathcal{P}(\mathbb{R}^d)\to\mathbb{R}^d$, $W$ is a  $d$-dimensional Brownian
motion, and initial condition $X_0$ is a $d$-dimensional vector that is independent of $W$.

We say that a function $h\colon\mathbb{R}^d\to\mathbb{R}^{d}$ satisfies the \textit{Veretennikov--Khasminskii
condition} if there exist $M>0$, $r>0$ such that
\begin{equation}\label{VH}
\langle h(x),x\rangle\le  - r |x|,\quad x\in\mathbb{R}^d,\,|x|\ge M.
\end{equation}
Here $\langle\cdot,\cdot\rangle$ is a standard scalar product in $\mathbb{R}^d$.

If the drift coefficient $b$ does not depend on the measure  $\mu$ (and satisfies certain conditions), then
\eqref{SMV} is a stochastic differential equation and its strong solution is a Markov process. Ergodic
properties of this Markov process were studied by many authors.  As shown in \cite{V1987}, if the function  $b$
satisfies inequality \eqref{VH}, then the strong solution of this equation has a unique invariant
measure. Moreover, $\Law(X_t)$ converges exponentially to this measure in total variation as  $t\to\infty$.

Let us extend this result to the case of McKean--Vlasov equations. Assume that the drift $b$ consists of two
parts $b_1$ and $\eps b_2$, where the function $b_1$ does not depend on the measure, and $\eps b_2$ is a small nonlinear perturbation. In other words,
\begin{equation}\label{snos}
b(x,\mu)=b_\eps(x,\mu)=b_1(x)+\eps b_2(x,\mu),\quad x\in\mathbb{R}^d,\,\, \mu\in\mathcal{P}(\mathbb{R}^d),
\end{equation}
where $\eps>0$. We also assume that the functions $b_1$ and $b_2$ are Lipschitz, i.e.,
there exists a positive $L>0$ such that
\begin{equation}\label{lip}
|b_1(x)-b_1(y)|+|b_2(x,\mu)-b_2(y,\nu)|\le L(|x-y|+\rho_2(\mu,\nu)),\,\, x,y\in\mathbb{R}^d,\,\,
\mu,\nu\in\mathcal{P}(\mathbb{R}^d),
\end{equation}
where $\rho_2(\mu,\nu)$ is the 2-Wasserstein distance between the measures $\mu$ and $\nu$. Recall that this distance is defined by the following formula
\begin{equation*}
\rho_2(\mu,\nu):=\Bigl(\inf_{\lambda\in \mathcal{C}(\mu,\nu)} \int_{\mathbb{R}^d\times \mathbb{R}^d}
|x-y|^2\wedge 1 \,\lambda(dx,dy)\Bigr)^{1/2},
\end{equation*}
where $\mathcal{C}(\mu,\nu)$ is the set of all probability measures on
$(\mathbb{R}^d\!\!\:\times\!\!\: \mathbb{R}^d)$ with marginals $\mu$ and~$\nu$.

As shown in \cite[Proposition~1.2]{JMW}, under these conditions for any $\eps\ge0$ equation \eqref{SMV} has a unique strong solution $(X_t^\eps,\mu_t^\eps)_{t\ge 0}$. If the initial distribution $\mu=\mu_0^\eps$ is fixed (and hence the distribution $\mu_t^\eps$ is also fixed), then the process $(X_t^\eps)_{t\ge 0}$
is a nonhomogeneous Markov process (however its transition probabilities are different for different initial distributions $\mu$). We will denote $P^t_\mu(\eps)(x,A):=\P_{0,x}(X_t^\eps\in A)$, where $\eps>0$, $x\in
\mathbb{R}^d$ and $A\in\mathcal{B}(\mathbb{R}^d)$. By definition, we have $\mu_t^\eps=P^t_{\mu}(\eps)\mu$.

\begin{Theorem}\label{Th:mv}
Suppose that conditions \eqref{snos} and \eqref{lip} are satisfied. Assume additionally that
\begin{enumerate}
    \item [1)] the function $b_1$ satisfies condition \eqref{VH};

    \item [2)] the function $b_2$ is uniformly bounded, i.e. $$\sup_{\substack{x\in\mathbb{R}^d\\\mu\in\mathcal{P}(\mathbb{R}^d)}}|b_2(x,\mu)|\le D$$
        for some $D>0$.
\end{enumerate}

Then there exists $\eps_0>0$ such that for any $\eps\in [0,\eps_0]$ McKean-Vlasov equation \eqref{SMV} has a unique invariant measure $\pi^\eps$. Moreover, for any measure $\mu_0\in\mathcal{P}(\mathbb{R}^d)$ such that $I(\mu_0):=\int_{\mathbb{R}^d} e^x \,\mu_0(dx)<\infty$ one has
\begin{equation}\label{otsenka}
d_{TV}(\mu_t^\eps,\pi^\eps)\le C (1+I(\mu_0))e^{-\theta t},\quad t\ge0,
\end{equation}
for some positive $C=C(\eps)$ and $\theta=\theta(\eps)$.
\end{Theorem}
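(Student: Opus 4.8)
The plan is to deduce Theorem~\ref{Th:mv} from the local-Dobrushin counterpart of Theorem~\ref{Th:DiscreteTime} established in the remainder of Section~\ref{sect:2}, applied to the time-one skeleton of the solution. Since \eqref{SMV} is well posed for every $\eps\ge0$ (\cite[Proposition~1.2]{JMW}), the map $\Phi_s\colon\mu\mapsto\mu^\eps_s$ (flow started from $\mu$) is a semigroup on $\mathcal P(\mathbb R^d)$, and by time-homogeneity of the coefficients $\Law(X^\eps_{n+1}\mid X^\eps_n=x)=P^1_{\widetilde\mu_n}(\eps)(x,\cdot)$, where $\widetilde\mu_n:=\Law(X^\eps_n)$. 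Hence $\widetilde X_n:=X^\eps_n$ is a discrete-time nonlinear Markov chain in the sense of Section~\ref{sect:2} with transition kernel $\widetilde P_\nu:=P^1_\nu(\eps)$. To run the local-Dobrushin machinery on $\widetilde X$ --- which is precisely the Hairer--Mattingly scheme \cite{HM} transplanted to the nonlinear setting --- one must verify, uniformly in $\nu\in\mathcal P(\mathbb R^d)$ and in $\eps\in[0,\eps_0]$ for a small $\eps_0$: (a) a geometric drift condition for $\widetilde P_\nu$ with an exponential Lyapunov function; (b) the local Dobrushin condition \eqref{osnMP} for $\widetilde P_\nu$ on a sublevel set of that Lyapunov function; and (c) smallness of the dependence of $\widetilde P_\nu$ on $\nu$. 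Finally one transfers the conclusion back to continuous time.

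For (a) take $V(x)=e^{\kappa|x|}$ with $\kappa\in(0,1]$ small enough that $\kappa<2(r-\eps_0 D)$. It\^o's formula together with $\langle b_1(x),x\rangle\le-r|x|$ for $|x|\ge M$ and $|b_2|\le D$ gives, for the generator $\mathcal L^\eps_\nu$ of the frozen equation,
\[
\mathcal L^\eps_\nu V(x)\le\kappa\,e^{\kappa|x|}\Bigl(-r+\eps D+\tfrac\kappa2+\tfrac{d-1}{2|x|}\Bigr),
\]
so that $\mathcal L^\eps_\nu V\le-cV$ outside a large ball and $\mathcal L^\eps_\nu V$ is bounded inside it (here $b_1$, being Lipschitz, is bounded on compacts); integrating, $\widetilde P_\nu V(x)=\mathsf E_{0,x}V(X^\eps_1)\le\gamma V(x)+K_0$ with $\gamma=\gamma(\eps)<1$ and $K_0=K_0(\eps)<\infty$ independent of $\nu$. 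Since $\kappa\le1$ one has $\mu_0(V)\le I(\mu_0)$, finite by hypothesis, which will produce the prefactor in \eqref{otsenka}. Fix the ``good'' set $\mathcal K:=\{V\le R_*\}$ with $R_*$ large (say $R_*>4K_0/(1-\gamma)$).

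Conditions (b) and (c) rest on the fact that the noise in \eqref{SMV} is a standard Brownian motion, so Girsanov's theorem applies with an explicit drift. For (b): on a ball $B$ slightly larger than $\mathcal K$ the drift $b_\eps(\cdot,\nu)$ is bounded by a constant depending only on $B$ and $\eps_0 D$, uniformly in $\nu$; the standard density lower bound for such SDEs (as used in \cite{V1987}, after localizing on $B$ since $b_1$ is only Lipschitz) shows that $\widetilde P_\nu(x,\cdot)$ has a density bounded below by a positive constant on $\mathcal K\times\mathcal K$, uniformly in $\nu$, so \eqref{osnMP} holds on $\mathcal K$ with some $\alpha=\alpha(R_*)>0$. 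For (c): driving the frozen equations from $x$ with the flows $\Phi_\bullet\mu$, $\Phi_\bullet\nu$ by the \emph{same} Brownian motion, the Radon--Nikodym density between the two path laws involves only the drift difference $\eps(b_2(\cdot,\Phi_s\mu)-b_2(\cdot,\Phi_s\nu))$, of norm at most $\eps L\,\rho_2(\Phi_s\mu,\Phi_s\nu)$ by \eqref{lip}; Pinsker's inequality gives $d_{TV}(\widetilde P_\mu(x,\cdot),\widetilde P_\nu(x,\cdot))^2\le\eps^2L^2\!\int_0^1\!\rho_2(\Phi_s\mu,\Phi_s\nu)^2\,ds$, while a synchronous coupling of the two McKean--Vlasov flows, \eqref{lip}, and Gr\"onwall's lemma --- in which the measure-feedback in the drift carries the extra factor $\eps$ --- yield $\sup_{s\le1}\rho_2(\Phi_s\mu,\Phi_s\nu)\le e^{C_L}\rho_2(\mu,\nu)$ with $C_L$ depending only on $L$. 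Altogether
\begin{equation*}
d_{TV}\bigl(\widetilde P_\mu(x,\cdot),\widetilde P_\nu(x,\cdot)\bigr)\le c_1\eps\,\rho_2(\mu,\nu)\le c_1\eps,\qquad x\in\mathbb R^d,
\end{equation*}
so, choosing $\eps_0$ small \emph{after} $\kappa$, $R_*$, $\gamma$, $K_0$, $\alpha$ are fixed, the $\nu$-dependence is as small as the local-Dobrushin theorem demands.

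With (a)--(c) in hand that theorem applies to $\widetilde X$ and yields a unique fixed point $\pi^\eps$ of $\widetilde P=P^1_\cdot(\eps)$ (equivalently of $\Phi_1$) together with $d_{TV}(\widetilde\mu_n,\pi^\eps)\le C_2(1+\mu_0(V))e^{-\theta_0 n}$. Since $\Phi_1$ has $\pi^\eps$ as its unique fixed point and $\Phi_s\circ\Phi_1=\Phi_1\circ\Phi_s$, every $\Phi_s\pi^\eps$ is fixed by $\Phi_1$, hence $\Phi_s\pi^\eps=\pi^\eps$ for all $s\ge0$; likewise any invariant measure of \eqref{SMV} is a fixed point of $\Phi_1$, so $\pi^\eps$ is the unique one. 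For $t=n+s$, $s\in[0,1)$, one has $\mu^\eps_t=P^s_{\widetilde\mu_n}(\eps)\widetilde\mu_n$ and $\pi^\eps=P^s_{\pi^\eps}(\eps)\pi^\eps$; using that a fixed Markov kernel is non-expansive for $d_{TV}$, that $P^s_{\widetilde\mu_n}(\eps)$ and $P^s_{\pi^\eps}(\eps)$ differ by at most $c_1\eps\rho_2(\widetilde\mu_n,\pi^\eps)\le c_1\eps\sqrt{d_{TV}(\widetilde\mu_n,\pi^\eps)/2}$, that $n\ge t-1$, and that $\sqrt{1+\mu_0(V)}\le1+I(\mu_0)$, one obtains \eqref{otsenka} with $\theta=\theta_0/2$ and a suitable $C=C(\eps)$. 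I expect the main obstacle to be item (c) together with the ordering of constants: the naive differential inequality for $\rho_2(\Phi_s\mu,\Phi_s\nu)$ is only H\"older in the unknown and is not a genuine contraction unless one uses that the measure-feedback carries the factor $\eps$, and one must fix $\kappa$, then a preliminary $\eps_0$ for the drift, then $\mathcal K$ and $\alpha$, and only at the end the final $\eps_0$ making $c_1\eps_0$ small relative to $\alpha$, $\gamma$, $K_0$; a secondary point is obtaining the density lower bound uniformly in $\nu$ despite $b_1$ being merely Lipschitz.
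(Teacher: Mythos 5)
Your plan --- pass to the time-one skeleton $\widetilde P_\nu := P^1_\nu(\eps)$, verify a Lyapunov drift (a), a local Dobrushin condition (b), and smallness of the $\nu$-dependence (c), then close a Hairer--Mattingly contraction in a weighted total variation metric --- matches the skeleton of the paper's proof, and (a) and (b) are essentially right (the paper establishes (b) through Wang's Harnack inequality in Lemma~\ref{L:Lok_dobr}, rather than a localized density lower bound, but both routes are plausible). One structural point to flag: there is no ``local-Dobrushin counterpart of Theorem~\ref{Th:DiscreteTime} established in the remainder of Section~\ref{sect:2}'' to appeal to --- that section ends with the announcement sentence --- so the nonlinear contraction has to be assembled from Lemma~\ref{L:2HM} (the \emph{linear} Hairer--Mattingly contraction) together with an explicit, quantitative perturbation estimate.

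The genuine gap is precisely in that perturbation estimate, your item (c). Via Pinsker you obtain a bound on $d_{TV}(\widetilde P_\mu(x,\cdot),\widetilde P_\nu(x,\cdot))$ uniform in $x$, but the contraction from Lemma~\ref{L:2HM} lives in the \emph{weighted} metric $d_{1+\beta V}$, and to close
\begin{equation*}
d_{1+\beta V}(\widetilde P_\mu\mu,\widetilde P_\nu\nu)\le d_{1+\beta V}(\widetilde P_\mu\mu,\widetilde P_\mu\nu)+d_{1+\beta V}(\widetilde P_\mu\nu,\widetilde P_\nu\nu)
\end{equation*}
you must also control the $V$-weighted piece $\int V\,d|\widetilde P_\mu\delta_x-\widetilde P_\nu\delta_x|$ by a quantity carrying a factor of $\eps$. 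A plain $d_{TV}$ bound says nothing about that tail term, and the naive bound via the drift condition gives $2(\gamma V(x)+K)$, which does \emph{not} vanish as $\eps\to0$. The paper supplies the missing piece through a Cauchy--Schwarz argument on the Girsanov density in Lemma~\ref{L:5estimate}, which requires the stronger $V^2$-Lyapunov estimate \eqref{Lyap2} in addition to \eqref{Lyap}; you set up only a $V$-Lyapunov, so this is a missing key idea rather than bookkeeping. A secondary concern is that your bound is proportional to $\rho_2(\mu,\nu)$, which for the truncated Wasserstein distance is in general only of order $(d_{TV}(\mu,\nu))^{1/2}$; an inequality of the form $\lambda d + c\eps\sqrt d$ is not a contraction for $d$ near $0$, so the ``H\"older in the unknown'' problem you flag is not neutralized merely by the factor $\eps$. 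The paper sidesteps it by estimating the feedback directly against $d_{TV}(\mu_0,\nu_0)$ in Lemma~\ref{L:3} and then feeding that linear bound into Lemma~\ref{L:5estimate}.
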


Thus, Theorem~\ref{Th:mv} shows that if a drift that satisfies Veretennikov-Khasminskii condition undergoes a small nonlinear perturbation, then ergodic properties of the strong solution of McKean--Vlasov equation remain the same.

While the proof of the theorem is postponed to Section~\ref{S:proofs}, we outline now the main steps. Note  that for a fixed $\eps>0$ the discretized process $(X_{Tn}^\eps)_{n\in\mathbb{Z}_+}$, where $T>0$, is a nonlinear Markov chain. However this process satisfies condition \eqref{osnNMP} only on certain subsets of $\mathbb{R}^d$ (for instance, on all compact sets) but not on the whole space $\mathbb{R}^d$. Therefore Theorem~\ref{Th:DiscreteTime} cannot be applied here.

\begin{proof}[Sketch of the proof of Theorem~\ref{Th:mv}]
To prove the theorem we develop the Hairer--Matt\-in\-gly technique (\cite{Hair}, \cite{HM}) of constructing auxiliary contraction mappings. Namely, we show that for all sufficiently ``small'' $\eps$  and a certain ``nice'' function $f\colon\mathbb{R}^d\to \mathbb{R}_+$ we have the following contraction inequality in the weighted total variation metric $d_f$:
\begin{equation*}
d_f(P^1_\mu (\eps)\mu, P^1_\nu(\eps) \nu)\le \lambda d_f(\mu, \nu),\quad\mu,\nu\in\mathcal{P}(\mathbb{R}^d)
\end{equation*}
where $\lambda\in(0,1)$ and $\lambda$ does not depend on measures $\mu,\nu$. This inequality yields the existence and uniqueness of an invariant measure as well as exponential convergence to stationarity.
\end{proof}

\section{Proofs}\label{S:proofs}
\begin{proof}[Proof of Theorem $\ref{Th:DiscreteTime}$]
\textbf{$($i$)$} First of all, note that for any probability measures $\mu,\nu\in\mathcal{P}(E)$ one has
\vskip-1ex
\begin{equation}\label{szhatie}
d_{TV}(P_\mu \mu,P_\nu \nu)\le d_{TV}(\mu,\nu) (1-\alpha+\lambda)-\lambda d_{TV}(\mu,\nu)^2/2.
\end{equation}
Indeed, if $d_{TV}(\mu,\nu)=0$, then $\mu=\nu$ and inequality \eqref{szhatie} is trivial. Otherwise, denote
$\eta(dx):=(d\mu/d\nu\wedge 1)\nu(dx)$, where $d\mu/d\nu$ is a Radon--Nikodym derivative of absolutely continuous part of $\mu$ with respect to $\nu$. Then
\begin{equation}\label{vspom}
d_{TV}(P_\mu \mu,P_\nu \nu)\le d_{TV}(P_\mu \eta,P_\nu \eta) + d_{TV}(P_\mu (\mu-\eta),P_\nu (\nu-\eta)).
\end{equation}
Applying inequality \eqref{cond2}, we get
\begin{equation}\label{vspom2}
d_{TV}(P_\mu \eta,P_\nu \eta)\!\le2\!\int_{E\times E}\bigl|P_\mu(x,dt)-P_\nu(x,dt)\bigr|\eta(dx)\!\le \lambda d_{TV}(\mu,\nu) (1-d_{TV}(\mu,\nu)/2),
\end{equation}
where we took into account that $\eta(E)=1-d_{TV}(\mu,\nu)/2$.

On the other hand, it follows from \eqref{osnNMP} that
\begin{align*}
d_{TV}&(P_\mu (\mu-\eta),P_\nu
(\nu-\eta))=2\int_E\Bigl|\int_E P_\mu(x,dt)(\mu-\eta)(dx)-\int_E P_\nu(y,dt)(\nu-\eta)(dy)\Bigr|\\
&\le\frac{4}{d_{TV}(\mu,\nu)}\int_{E^3}|P_\mu(x,dt)-P_\nu(y,dt)|(\mu-\eta)(dx)(\nu-\eta)(dy)\le
(1-\alpha)d_{TV}(\mu,\nu).
\end{align*}
Combining this inequality with \eqref{vspom} and \eqref{vspom2}, we derive
\eqref{szhatie}.

By iterating inequality \eqref{szhatie} $n$ times, we obtain that if $\lambda<\alpha$, then
\begin{equation}\label{epsgl}
d_{TV}(P^n_\mu \mu,P^n_\nu \nu)\le 2(1-\alpha+\lambda)^n,
\end{equation}
and if $\lambda=\alpha$, then
\begin{equation}\label{epsrl}
d_{TV}(P^n_\mu \mu,P^n_\nu \nu)\le 2/(\lambda n).
\end{equation}

Now we prove that the process $X$ has an invariant measure. Consider the sequence of measures $(\mu_n)_{n\in\mathbb{Z}_+}$. Let us verify that this sequence is a Cauchy sequence in the metric space $(\mathcal{P}(E),d_{TV})$. It follows from \eqref{epsgl} and \eqref{epsrl} that for any  $m,n\in\mathbb{N}$ we have
$$
d_{TV}(\mu_n ,\mu_{n+m})=d_{TV}(P^n_{\mu_0} \mu_0,P^n_{\mu_m} \mu_m)\le 2/(\lambda
n).
$$
Since the space $(\mathcal{P}(E),d_{TV})$ is complete, we see that there exist a probability measure $\pi\in
\mathcal{P}(E)$ such that $d_{TV}(\mu_n,\pi)\to 0$ as $n\to\infty$. We make use of \eqref{szhatie} to derive for any positive integer  $n$
\begin{align*}
d_{TV}(P_\pi \pi,\pi)&\le d_{TV}(P_\pi \pi,\mu_{n+1})+d_{TV}(\mu_{n+1},\pi)=
d_{TV}(P_\pi \pi,P_{\mu_n} \mu_n)+d_{TV}(\mu_{n+1},\pi)\\
&\le d_{TV}(\pi,\mu_n)+d_{TV}(\pi,\mu_{n+1}).
\end{align*}
The right-hand side of the above inequality tends to $0$ as $n\to\infty$. Hence  $P_\pi \pi =\pi$ and the measure $\pi$ is invariant. The uniqueness of an invariant measure follows from \eqref{szhatie}. Indeed, if a measure $\zeta$ is also invariant for the process $X$ and $\pi\neq\zeta$, then
$d_{TV}(\pi,\zeta)=d_{TV}(P_{\pi} \pi,P_{\zeta} \zeta)< d_{TV}(\pi,\zeta)$, which is impossible.

Substituting $\pi$ for $\nu$ in \eqref{epsgl} and \eqref{epsrl}, we get
\eqref{convrate_fast} and \eqref{convrate_slow}.

\textbf{$($ii$)$} Now let us prove that the condition $\lambda\le\alpha$ is optimal. Assume that this condition does not hold and $0<\alpha<\lambda\le 1$. First, we give an example of the process that has more than one stationary measure.

Consider a nonlinear Markov chain $X$ taking values in the state space $(E,\mathcal{E})=(\{1,2\},2^{\{1,2\}})$. Define the transition probability matrix of the chain by the following formula:
\begin{equation*}
\!{\boldsymbol{P_\nu}} =\!
\begin{pmatrix}
 \bigl(\!(1-\lambda\nu(\{2\}))\wedge(1-\frac{\alpha}{2})
 \bigr)\vee (1-\lambda+\frac{\alpha}{2})&  \bigl(\lambda\nu(\{2\})\wedge(\lambda-\frac{\alpha}{2})
 \bigr)\vee \frac{\alpha}{2}\\[0.5ex]
 \bigl(\lambda\nu(\{1\})\wedge(\lambda-\frac{\alpha}{2})
 \bigr)\vee \frac{\alpha}{2}&\!\!\!\! \bigl((1-\lambda\nu(\{1\}))\wedge(1-\frac{\alpha}{2})
 \bigr)\vee (1-\lambda+\frac{\alpha}{2})
\end{pmatrix}\!
.\end{equation*}

Let us verify that $X$ satisfies conditions \eqref{osnNMP} and \eqref{cond2}. Indeed, for any $i,j\in E$ and $\mu,\nu\in\mathcal P(E)$ we have  $P_\mu(i,j)\ge\alpha/2$. Consequently,
\begin{equation*}
P_\mu(i,1)\wedge P_\nu(j,1)+P_\mu(i,2)\wedge P_\nu(j,2)\ge \alpha,
\end{equation*}
and condition\eqref{osnNMP} holds. Similarly, for any $i\in E$, $\mu,\nu\in\mathcal P(E)$
\begin{equation*}
\bigl|P_\mu(i,1)-P_\nu(i,1)\bigr|+\bigl|P_\mu(i,2)-P_\nu(i,2)\bigr|\le \lambda d_{TV}(\mu,\nu),
\end{equation*}
and condition \eqref{cond2} is also met.

On the other hand, it is easy to see that for any $a\in I:=[\alpha/(2 \lambda), 1-\alpha/(2\lambda)]$ the measure $\mu(a):=a\delta_1+(1-a)\delta_2$ is stationary for $X$. Hence, $X$ has more than one stationary measure (actually, continuum of stationary measures). Moreover, $d_{TV}(X_n^{\mu(a_1)},X_n^{\mu(a_2)})\not\to 0$ as $n\to\infty$ for $a_1,a_2\in I$, $a_1\neq a_2$.

Now we give an example of the process, which has no stationary measures. To construct the example one should
consider processes taking values in infinite state spaces. Indeed, if a process has a finite state space and satisfies \eqref{osnNMP} and \eqref{cond2}, then, by Brouwer fixed-point theorem, it has (at least one) stationary measure.

Let $X$ be a nonlinear Markov chain with the state space $(E,\mathcal{E})=(\mathbb{N},2^{\mathbb{N}})$.
Define the transition probabilities $P_\nu(i,j)$,\, $i,j\in\mathbb{N}$, $\nu\in \mathcal P(\mathbb{N})$ by the following formulas:
\begin{align*}
P_\nu(i,1)&:=\bigl(\lambda \nu(\{1\})\bigr)\vee\alpha\\
P_\nu(i,j)&:=\bigl((\lambda \nu(\{1,2,\dots,j\})-\alpha)\wedge \lambda \nu(\{j\})\bigr)\vee 0\, +
(1-\lambda)\I(j=i+1),\quad j\neq 1.
\end{align*}

First let us show that the transition probabilities are well-defined, i.e. their sum in each row is $1$. Fix $\nu\in \mathcal P(\mathbb{N})$. Since $\lambda>\alpha$, we see that there exists a positive integer $n=n(\nu)$ such that $\lambda \nu(\{1,2,\dots,n\})\ge\alpha$ and $\lambda \nu(\{1,2,\dots,n-1\})<\alpha$. If $n=1$, then $P_\nu(i,j)=\lambda \nu(\{j\})+(1-\lambda)\I(j=i+1)$ for all $i,j\in\mathbb{N}$, and the sum over $j$ of transition probabilities is obviously $1$. If $n>1$, then for any $i\in\mathbb{N}$ we get
\begin{align*}
P_\nu(i,1)&=\alpha,\\
P_\nu(i,j)&=(1-\lambda)\I(j=i+1),\quad\text{for }1<j<n,\\
P_\nu(i,n)&=\lambda \nu(\{1,2,\dots,n\})-\alpha+(1-\lambda)\I(n=i+1),\\
P_\nu(i,j)&=\lambda \nu(\{j\})+(1-\lambda)\I(j=i+1)\quad\text{for }j>n.
\end{align*}
Therefore in this case also $\sum_{j=1}^\infty P_\nu(i,j)=1$.

Since for any positive integer $i$ and for any measure $\nu\in \mathcal P(\mathbb{N})$ we have $P_\nu(i,1)\ge\alpha$, we see that the process $X$ satisfies \eqref{osnNMP}. Let us verify that condition \eqref{cond2} also holds. Fix $i\in\mathbb{N}$ and measures $\mu,\nu\in\mathcal P(\mathbb{N})$. Then the left-hand side of \eqref{cond2} is equal to $\sum_{j=1}^\infty \bigl|P_\nu(i,j)-P_\mu(i,j)\bigr|$. Define $n(\nu)$, $n(\mu)$ as above. Without loss of generality, suppose $n(\nu)\ge n(\mu)$. If, actually, $n(\nu)> n(\mu)$, then
\begin{align}\label{provlambda}
\sum\limits_{j=1}^\infty \bigl|P_\nu(i,j)-P_\mu(i,j)\bigr|=&
\lambda \mu(\{1,\dots,n(\mu)\})-\alpha+\sum\limits_{j=n(\mu)+1}^{n(\nu)-1} \lambda \mu(\{j\})\nonumber\\
&+\bigl|\lambda \nu(\{1,\dots,n(\nu)\})-\alpha-\lambda \mu(\{n(\nu)\})\bigr|\nonumber\\
&+\lambda\sum\limits_{j=n(\nu)+1}^\infty \bigl|\mu(\{j\})-\nu(\{j\})\bigr|.
\end{align}
By definition of $n(\nu)$, we have $\alpha-\lambda\nu(\{1,\dots,n(\nu)-1\})>0$. Hence
\begin{equation*}
\bigl|\lambda \nu(\{1,\dots,n(\nu)\})-\alpha-\lambda \mu(\{n(\nu)\})\bigr|\!\le\! \alpha-\lambda
\nu(\{1,\dots,n(\nu)-1\})+\lambda \bigl|\nu(\{n(\nu)\})- \mu(\{n(\nu)\})\bigr|.
\end{equation*}
Combining the last inequality with \eqref{provlambda}, we derive
\begin{align*}
\sum\limits_{j=1}^\infty \bigl|&P_\nu(i,j)-P_\mu(i,j)\bigr|\\
&\le \lambda \bigl(\mu(\{1,\dots,n(\nu)-1\})-\nu(\{1,\dots,n(\nu)-1\})\bigr)+\lambda\sum\limits_{j=n(\nu)}^\infty \bigl|\mu(\{j\})-\nu(\{j\})\bigr|\\
&\le \lambda\sum\limits_{j=1}^\infty\bigl|\mu(\{j\})-\nu(\{j\})\bigr|=\lambda d_{TV}(\mu,\nu),
\end{align*}
and condition \eqref{cond2} is satisfied. If $n(\nu)=n(\mu)$, then, by a similar argument
\begin{align*}
\sum\limits_{j=1}^\infty \bigl|&P_\nu(i,j)-P_\mu(i,j)\bigr|\\
&=\lambda\bigl|\nu(\{1,\dots,n(\nu)\})-\mu(\{1,\dots,n(\nu)\})\bigr|
+\lambda\sum\limits_{j=n(\nu)+1}^\infty \bigl|\mu(\{j\})-\nu(\{j\})\bigr|\\
&\le \lambda\sum\limits_{j=1}^\infty \bigl|\mu(\{j\})-\nu(\{j\})\bigr|=\lambda d_{TV}(\mu,\nu).
\end{align*}

Finally, let us verify that the process $X$ has no stationary measures. Assume the converse. Let the measure
$\mu\in\mathcal{P}(\mathbb{N})$ be invariant for $X$. Then for any $j\in\mathbb{N}$
\begin{equation}\label{std}
\sum\limits_{j=1}^\infty \mu(\{i\}) P_\mu(i,j)=\mu(\{j\}).
\end{equation}
If $\mu(\{1\})\ge\alpha/\lambda$, then \eqref{std} implies $\mu(\{1\})=0$. Therefore
$\mu(\{1\})<\alpha/\lambda$ and hence $\mu(\{1\})\!=\!\alpha$. Define $n(\mu)$ as above. It follows from the definition of  $n(\mu)$ that  ${\mu(\{n(\mu)\})\!>\!0}$. On the other hand, \eqref{std} yields
\begin{align*}
\mu(\{i\})&=\alpha (1- \lambda)^{i-1}\quad\text{ for } 1\le i<n(\mu),\\
\mu(\{n(\mu)\})&=\alpha(1- \lambda)^{n(\mu)-1}+\lambda\mu(\{1,\dots,n(\mu)\})-\alpha,
\end{align*}
whence $\mu(\{n(\mu)\})=0$. This contradiction proves that $X$ has no invariant measures.
\end{proof}

Now we move on to the proof of Theorem~\ref{Th:mv}. We will use the following lemma, which is due to M.~Hairer and J.~Mattingly.

\begin{Lemma}[{\cite[Theorem~3.9]{Hair}}, see also \cite{HM}]\label{L:2HM}
Let $Q$ be a Markov transition kernel on a measurable space $(E,\mathcal{E})$. Assume that for some function $V\colon E\to \mathbb{R}_+$ and constants $K\ge0$, $\gamma\in[0,1)$ one has
\vskip-2ex
\begin{equation}\label{lyap_gen}
Q V\le \gamma V + K.
\end{equation}
Furthermore, assume that the kernel $Q$ satisfies the local Dobrushin condition \eqref{osnMP} on the set $\{x\in E: V(x)\le
4K/(1-\gamma)\}$.

Then there exist constants $\lambda\in[0,1)$ and $\beta>0$ that depend only on
$\alpha$ from \eqref{osnMP}, $\gamma$ and $K$ $($but not on the kernel $Q${}$)$ such that
\begin{equation*}
d_{1+\beta V} (Q \mu,Q \nu )\le \lambda d_{1+\beta V}  (\mu,\nu)
\end{equation*}
for any $\mu,\nu\in\mathcal{P}(E)$.
\end{Lemma}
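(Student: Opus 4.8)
The plan is to run the ``weak Harris'' argument of Hairer and Mattingly. The starting observation is that the weighted total variation distance is itself a Wasserstein distance for a suitable cost: put
\[
\rho(x,y):=\mathbf{1}_{\{x\neq y\}}\,(2+\beta V(x)+\beta V(y)),
\]
and let $W_\rho(\mu,\nu)$ be the infimum of $\int_{E\times E}\rho\,d\pi$ over all couplings $\pi$ of $\mu$ and $\nu$. Then $W_\rho=d_{1+\beta V}$ on $\mathcal{P}(E)$: by the Hahn--Jordan decomposition $d_{1+\beta V}(\mu,\nu)=|\mu-\nu|(1+\beta V)$, and since the off-diagonal part of $\rho$ is additively separable, the coupling that places the common mass $\mu\wedge\nu$ on the diagonal at zero cost and couples the mutually singular remainders arbitrarily attains exactly $|\mu-\nu|(1+\beta V)$, while no coupling can do better. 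Because $W_\rho=d_{1+\beta V}$, it suffices to prove a one-step contraction of $W_\rho$, and gluing couplings along an optimal coupling of $\mu$ and $\nu$ reduces this to the pointwise estimate $W_\rho(Q(x,\cdot),Q(y,\cdot))\le\lambda\,\rho(x,y)$ for all $x,y\in E$.

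For the pointwise estimate I split on $s:=V(x)+V(y)$ at the threshold $R:=4K/(1-\gamma)$, i.e. the level on which \eqref{osnMP} is assumed. If $s\ge R$, couple $Q(x,\cdot)$ and $Q(y,\cdot)$ independently; then $\int\rho\,d\pi\le 2+\beta(QV(x)+QV(y))\le 2+\beta\gamma s+2\beta K$ by \eqref{lyap_gen}, and since $\lambda>\gamma$ the difference $\lambda(2+\beta s)-(2+\beta\gamma s+2\beta K)$ is nondecreasing in $s$, so the bound $\int\rho\,d\pi\le\lambda\rho(x,y)$ holds for every $s\ge R$ once it holds at $s=R$, which is a lower bound $\beta\ge\beta_-$. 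If $s<R$, then $x,y\in\{V\le R\}$, so by \eqref{osnMP} the maximal coupling of $Q(x,\cdot)$ and $Q(y,\cdot)$ assigns mass at least $\alpha$ to the diagonal at zero cost, while its product part contributes $\int\rho\,d\pi\le 2(1-\alpha)+\beta(QV(x)+QV(y))\le 2(1-\alpha)+\beta\gamma R+2\beta K$; since $\rho(x,y)\ge 2$ for $x\neq y$, this is $\le\lambda\rho(x,y)$ as soon as $2(1-\alpha)+\beta\gamma R+2\beta K\le 2\lambda$, i.e. $\lambda>1-\alpha$ and $\beta\le\beta_+$ for an explicit $\beta_+$.

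The only genuinely delicate point is that these two constraints on $\beta$ are compatible. Substituting $R=4K/(1-\gamma)$ one finds $\beta_-=(1-\lambda)(1-\gamma)/(K(2\lambda-\gamma-1))$ and $\beta_+=(\lambda-1+\alpha)(1-\gamma)/(K(1+\gamma))$, both positive and finite once $\lambda>\max\{(1+\gamma)/2,\,1-\alpha\}$, and the requirement $\beta_-\le\beta_+$ reduces after clearing denominators to $(1-\lambda)(1+\gamma)\le(\lambda-1+\alpha)(2\lambda-\gamma-1)$; as $\lambda\uparrow 1$ the left side tends to $0$ while the right side tends to $\alpha(1-\gamma)>0$, so this holds for all $\lambda$ sufficiently close to $1$. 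I would therefore first fix such a $\lambda\in(0,1)$ and then pick any $\beta\in[\beta_-,\beta_+]$; by construction $\lambda$ and $\beta$ depend only on $\alpha$, $\gamma$, $K$. Feeding these choices back into the two-regime bounds gives $W_\rho(Q(x,\cdot),Q(y,\cdot))\le\lambda\,\rho(x,y)$ for all $x,y$, and the reduction of the first paragraph upgrades this to $d_{1+\beta V}(Q\mu,Q\nu)\le\lambda\,d_{1+\beta V}(\mu,\nu)$ for all $\mu,\nu\in\mathcal{P}(E)$. What remains is routine: the Hahn--Jordan/coupling identity of the first paragraph, the elementary arithmetic bounding $\int\rho\,d\pi$ in each regime, and the measurable selection of couplings in the gluing step, carried out exactly as in \cite{Hair}.
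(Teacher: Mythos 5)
Your argument is correct and is precisely the ``weak Harris'' proof of Hairer and Mattingly, which is what the paper cites (the paper itself gives no proof of this lemma, only the reference to \cite[Theorem~3.9]{Hair} and \cite{HM}). The three ingredients you use --- the identification of $d_{1+\beta V}$ with the Wasserstein distance $W_\rho$ for the separable cost $\rho=\mathbf{1}_{\{x\neq y\}}(2+\beta V(x)+\beta V(y))$, the two-regime coupling (product coupling when $V(x)+V(y)\ge 4K/(1-\gamma)$, maximal coupling when below, which is where the choice of level $4K/(1-\gamma)$ is needed to make $\beta_-\le\beta_+$ for $\lambda$ near $1$), and the gluing along the Jordan-decomposition coupling of $\mu$ and $\nu$ --- are exactly those of the cited argument, and your computations of $\beta_\pm$ and the compatibility inequality $(1-\lambda)(1+\gamma)\le(\lambda-1+\alpha)(2\lambda-\gamma-1)$ check out. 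One cosmetic remark: both $\beta_\pm$ involve $1/K$, so the construction tacitly requires $K>0$ (the lemma as stated allows $K=0$, but that degenerate case can be handled by replacing $K$ with any $K'>0$, or is simply outside the intended scope); this is an infelicity of the statement, not of your proof.
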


Consider the following auxiliary SDE:
\begin{equation}\label{sdu_igr}
d Y_t^{(x)}=b_1(Y_t^{(x)})\,dt+dW_t,\quad Y_0^{(x)}=x,
\end{equation}
where the function $b_1$ is defined in \eqref{snos}. The lemma below is well-known to the specialists working in this area. For example, one can find the statement of the lemma (without proof) in \cite[p.~317]{VerTVP} or in \cite[p.~603]{Kulik} and the idea of the proof (to apply the Harnack inequality) was suggested to the author by A.Yu.~Veretennikov and A.M.~Kulik. Nevertheless, the author were not able to find the full proof of the lemma in the literature. Therefore we give it here for the completeness  of exposition.

\begin{Lemma}\label{L:Lok_dobr}
If the function $b_1$ is Lipschitz, then the strong solution of SDE \eqref{sdu_igr} satisfies the local Dobrushin condition on any compact set. In other words, for any $t>0$ and $R>0$ there exists $\alpha=\alpha(R,t)>0$ such that
\begin{equation}\label{DobrForSDE}
d_{TV}(\Law(Y_t^{(x)}),\Law(Y_t^{(y)}))\le 2 (1-\alpha(R,t)),\quad |x|\le R,\,|y|\le R.
\end{equation}
\end{Lemma}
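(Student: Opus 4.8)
The plan is to derive the local Dobrushin inequality \eqref{DobrForSDE} from the strict positivity and joint continuity of the transition density of \eqref{sdu_igr}, combined with a compactness argument. First I would recall that under the Lipschitz (hence locally bounded) assumption on $b_1$, the SDE \eqref{sdu_igr} has a unique strong solution and, since the diffusion coefficient is the identity (uniformly elliptic) and the drift is locally bounded, the law of $Y_t^{(x)}$ admits a density $p_t(x,z)$ with respect to Lebesgue measure for every $t>0$; moreover $p_t(x,z)>0$ for all $x,z\in\mathbb{R}^d$ and $(x,z)\mapsto p_t(x,z)$ is continuous. The positivity is the crucial input, and this is exactly where the Harnack inequality enters: $p_t(\cdot,z)$ is a nonnegative solution of the Kolmogorov equation, so a parabolic Harnack inequality forces it to be either identically zero or strictly positive, and it cannot be identically zero since $\int p_t(x,z)\,dz=1$.

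Granting positivity and continuity, the estimate follows by a standard coupling/infimum argument. For $|x|\le R$, $|y|\le R$ one has
\begin{equation*}
d_{TV}(\Law(Y_t^{(x)}),\Law(Y_t^{(y)}))=2\Bigl(1-\int_{\mathbb{R}^d} p_t(x,z)\wedge p_t(y,z)\,dz\Bigr),
\end{equation*}
so it suffices to bound $\int_{\mathbb{R}^d} p_t(x,z)\wedge p_t(y,z)\,dz$ from below by a constant $\alpha(R,t)>0$ uniformly over the closed ball $\{|x|\le R\}$. Fix any bounded open set $G$ (say the ball of radius $1$) and note that the function $(x,z)\mapsto p_t(x,z)$ is continuous and strictly positive, hence attains a positive minimum $c=c(R,t)>0$ on the compact set $\{|x|\le R\}\times \overline{G}$. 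Then for all $|x|,|y|\le R$,
\begin{equation*}
\int_{\mathbb{R}^d} p_t(x,z)\wedge p_t(y,z)\,dz\ge \int_{G} p_t(x,z)\wedge p_t(y,z)\,dz\ge c\,|G|=:\alpha(R,t)>0,
\end{equation*}
which gives \eqref{DobrForSDE}. (One may equivalently phrase this via a coupling: the two solutions can be coupled so that they coincide from time $t$ on with probability at least $\alpha(R,t)$.)

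The main obstacle is the rigorous justification of the existence, continuity, and strict positivity of the transition density $p_t(x,z)$ under only the Lipschitz assumption on $b_1$, since $b_1$ need not be bounded. I would handle this by localization: a stopping-time argument compares $Y^{(x)}$ up to the exit time from a large ball with the solution of an SDE having bounded, Lipschitz drift, for which the density estimates and Harnack inequality are classical (e.g.\ from parabolic PDE theory for the generator $\tfrac12\Delta + b_1\cdot\nabla$ with bounded coefficients, or from Aronson-type Gaussian bounds); one then lets the ball grow and uses that the exit time tends to infinity. Continuity in the starting point $x$ follows either from continuity of $x\mapsto Y_t^{(x)}$ in $L^2$ together with the density representation, or directly from Schauder/interior estimates for the PDE. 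Once these density facts are in hand, the remaining steps above are entirely elementary.
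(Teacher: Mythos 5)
Your proposal is correct in outline but follows a genuinely different route from the paper, and it leaves a real gap that you yourself flag.

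The paper does invoke Harnack, but in a different form and at a different level: it cites Wang's \emph{dimension-free} Harnack inequality \cite[Theorem~1.1(2)]{Wang} directly for the \emph{semigroup}, in the form
\begin{equation*}
\bigl(\P(Y_t^{(y)}\in A)\bigr)^2\le \P(Y_t^{(x)}\in A)\,\exp\bigl(C(t)|x-y|^2\bigr),
\end{equation*}
and then derives \eqref{DobrForSDE} by a one-line optimization. Writing $p=\P(Y_t^{(y)}\in A)$ and $c=\exp(-C(t)|x-y|^2)\wedge\tfrac12$, the inequality gives $|\P(Y_t^{(y)}\in A)-\P(Y_t^{(x)}\in A)|\le p-cp^2\le 1-c$ (the last step since $p\mapsto p-cp^2$ is nondecreasing on $[0,1]$ when $c\le\tfrac12$), which is uniform in $A$ and yields $d_{TV}\le 2-2c$. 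This requires no heat-kernel theory at all: it never introduces a density, never needs joint continuity or strict positivity, and never needs a compactness/minimum argument. By contrast, your version uses the \emph{parabolic} Harnack inequality on the PDE side to argue strict positivity of a transition density $p_t(x,z)$, then continuity plus compactness to bound the overlap $\int p_t(x,\cdot)\wedge p_t(y,\cdot)$ from below. This is the more classical Doeblin-type route and is conceptually transparent, but it front-loads nontrivial regularity facts that the paper's argument entirely sidesteps.

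The gap in your proposal is precisely the one you acknowledge: existence, joint continuity, and strict positivity of $p_t(x,z)$ for an \emph{unbounded} Lipschitz drift are asserted but not established, and the proposed localization is not spelled out far enough to be convincing. Comparing $Y^{(x)}$ to a bounded-drift SDE up to the exit time from a large ball gives control on the density of the \emph{killed} process, not directly on $p_t(x,z)$ itself; you would then need either to argue that the killed density already supplies a sufficient lower bound (which does work for the lower bound on the overlap, but you would need continuity of the killed density rather than of $p_t$), or to control the defect coming from paths that leave and return. Joint continuity of the full $p_t$ in $x$ as claimed is also not automatic from $L^2$-continuity of $x\mapsto Y_t^{(x)}$. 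None of this is unfixable, but as written the "obstacle" paragraph is a sketch of a plan rather than an argument. If you want to keep your density-based route, the cleanest repair is: work throughout with the sub-probability density $q_t^B(x,z)$ of the process killed at the exit from a fixed large ball $B\supset\{|x|\le R+1\}$; classical parabolic theory for bounded smooth (mollified) coefficients gives that $q_t^B$ is continuous and strictly positive on compacts of $B\times B$; then bound $\int p_t(x,\cdot)\wedge p_t(y,\cdot)\ge\int q_t^B(x,\cdot)\wedge q_t^B(y,\cdot)$ and conclude by compactness. Alternatively, adopt the paper's route via Wang's Harnack inequality, which eliminates the issue altogether.
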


\begin{proof}

Let us use the Harnack inequality for diffusion processes \cite[Theorem~1.1(2)]{Wang}. This inequality can be written in the following form: for any $t>0$ there exists $C=C(t)>0$ such that
\begin{equation}\label{Harnak}
(\P(Y_t^{(y)}\in A))^2\le \P(Y_t^{(x)}\in A)\exp(C(t)|x-y|^2)
\end{equation}
for any $x,y\in\mathbb{R}^d$, $A\in\mathcal{B}(\mathbb{R}^d)$. We claim that for any
$A\in\mathcal{B}(\mathbb{R}^d)$ we have the following estimate
\begin{equation*}
|\P(Y_t^{(y)}\in A)-\P(Y_t^{(x)}\in A)|\le 1- \exp(-C(t)|x-y|^2)\wedge (1/2),\quad x,y\in\mathbb{R}^d.
\end{equation*}
Indeed, suppose without loss of generality that $\P(Y_t^{(y)}\!\in\! A)>\P(Y_t^{(x)}\!\in\! A)$. Then by
\eqref{Harnak}, we have
\begin{align*}
|\P(Y_t^{(y)}\in A)&-\P(Y_t^{(x)}\in A)|\\
&\le \P(Y_t^{(y)}\in A)-(\P(Y_t^{(y)}\in A))^2\exp(-C(t)|x-y|^2)\\
&\le \P(Y_t^{(y)}\in A)-(\P(Y_t^{(y)}\in A))^2(\exp(-C(t)|x-y|^2)\wedge (1/2))\\
&\le 1- \exp(-C(t)|x-y|^2)\wedge (1/2).
\end{align*}
Therefore
\begin{equation*}
d_{TV}(\Law(Y_t^{(x)}),\Law(Y_t^{(y)}))\le 2- 2\exp(-C(t)|x-y|^2)\wedge 1.
\end{equation*}
This inequality implies \eqref{DobrForSDE}.
\end{proof}

Let us introduce twice continuously differentiable function $V\colon\mathbb{R}^d\to \mathbb{R}_+$ such that
${V(x)=e^{|x|(r/4\wedge 1)}}$ for $|x|\ge M$. Let us check that for some $T>0$ and small $\eps$ the transition kernels $P^T_\mu(\eps)$, $\mu\in\mathcal{P}(\mathbb{R}^d)$ satisfy conditions \eqref{osnMP} and \eqref{lyap_gen}.

\begin{Lemma}\label{L:3Proverka}
Suppose the conditions of Theorem~$\ref{Th:mv}$ hold. Then there exist  $T>0$, $\eps_0>0$, $\alpha>0$, $K>0$,
and $\gamma\in[0,1)$ such that
\begin{align}\label{Lyap}
&P^T_\mu(\eps) V \le \gamma V+K,\\
\label{Lyap2} &P^T_\mu(\eps) V^2 \le \gamma^2 V^2+K^2,\\
\label{dobr} &d_{TV}(P^T_\mu(\eps) \delta_x,P^T_\mu(\eps) \delta_y)\le 2(1-\alpha),\quad x,y\in S_V
\end{align}
for any $\eps\!\in\![0,\eps_0]$ and any measure $\mu\!\in\!\mathcal{P}(\mathbb{R}^d)$. Here
${S_V\!:=\!\{x\!\in\!\mathbb{R}^d\!: V(x)\!\le\!4K/\!(1-\gamma)\}}$.

Moreover for any measure $\mu\in\mathcal{P}(\mathbb{R}^d)$ we have the following estimate
\begin{equation}\label{est_mera}
(P^t_\mu(\eps) \mu)(V)\le \mu(V)+K,\quad t\ge0.
\end{equation}
\end{Lemma}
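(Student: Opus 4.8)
The plan is to observe that, once $\mu$ and $\eps$ are fixed, the conditioned process with law $P^t_\mu(\eps)\delta_x$ is just the solution $\widehat X$ of the ordinary time-inhomogeneous SDE $d\widehat X_t=\bigl(b_1(\widehat X_t)+\eps b_2(\widehat X_t,\mu^\eps_t)\bigr)\,dt+dW_t$ with $\widehat X_0=x$, whose drift is Lipschitz in space, of linear growth, and a $(\le\eps D)$-perturbation of $b_1$ by assumption~2). For $\nu\in\mathcal P(\mathbb{R}^d)$ write $\mathcal L^\eps_\nu\phi(z):=\tfrac12\Delta\phi(z)+\langle\nabla\phi(z),b_1(z)+\eps b_2(z,\nu)\rangle$; It\^o's formula applied to $\widehat X$ only ever involves $\mathcal L^\eps_{\mu^\eps_t}$, and since every estimate below will be uniform in $\nu$ the $t$-dependence is harmless. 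The statement then splits into (a) a Lyapunov/drift analysis for $V$ and $V^2$, and (b) a Dobrushin bound obtained by a Girsanov comparison of $\widehat X$ with the auxiliary diffusion $Y^{(x)}$ of \eqref{sdu_igr}.

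For (a), put $c:=r/4\wedge1$, so that $V(z)=e^{c|z|}$ and $V^2(z)=e^{2c|z|}$ on $\{|z|\ge M\}$. There one computes $\nabla V=cVz/|z|$ and $\Delta V=V\bigl(c^2+c(d-1)/|z|\bigr)$, with the analogous formulas for $V^2$; combining these with \eqref{VH} (which gives $\langle\nabla V(z),b_1(z)\rangle\le-crV(z)$) and with $|\langle\nabla V(z),\eps b_2(z,\nu)\rangle|\le c\eps D\,V(z)$, one obtains for $|z|\ge M$
\[
\mathcal L^\eps_\nu V(z)\le V(z)\Bigl(-cr+c\eps D+\tfrac{c^2}{2}+\tfrac{c(d-1)}{2M}\Bigr),\qquad \mathcal L^\eps_\nu V^2(z)\le 2V^2(z)\Bigl(-cr+c\eps D+c^2+\tfrac{c(d-1)}{2M}\Bigr).
\]
The choice $c=r/4\wedge1$ is exactly what forces $cr-c^2>0$, so after enlarging $M$ to some $M_0\ge M$ and taking $\eps_1>0$ small enough one gets a constant $\beta>0$, independent of $\nu$ and of $\eps\in[0,\eps_1]$, with $\mathcal L^\eps_\nu V\le-\beta V$ and $\mathcal L^\eps_\nu V^2\le-2\beta V^2$ on $\{|z|\ge M_0\}$. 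On the ball $\{|z|\le M_0\}$ both quantities are bounded uniformly in $\nu$ and $\eps\le\eps_1$ (again because $b_2$ is bounded and $V\in C^2$), so $\beta V+\mathcal L^\eps_\nu V\le\widetilde K_1$ and $2\beta V^2+\mathcal L^\eps_\nu V^2\le\widetilde K_2$ on all of $\mathbb{R}^d$ for suitable constants. A routine localisation — stopping at $\tau_n=\inf\{t:|\widehat X_t|\ge n\}$, applying It\^o to $e^{\beta t}V(\widehat X_t)$ and to $e^{2\beta t}V^2(\widehat X_t)$, taking expectations (the stopped stochastic integrals being genuine martingales) and letting $n\to\infty$ by Fatou — yields $\E V(\widehat X_t)\le e^{-\beta t}V(x)+\widetilde K_1/\beta$ and $\E V^2(\widehat X_t)\le e^{-2\beta t}V^2(x)+\widetilde K_2/(2\beta)$ for all $t\ge0$, all $x$ and all $\eps\le\eps_1$. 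Taking $T=1$, $\gamma:=e^{-\beta}\in(0,1)$ and $K:=\max\{\widetilde K_1/\beta,\sqrt{\widetilde K_2/(2\beta)}\}$ gives \eqref{Lyap} and \eqref{Lyap2} at once, and integrating the first bound against $\mu$ and using $e^{-\beta t}\le1$ gives \eqref{est_mera} (trivial if $\mu(V)=\infty$).

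For (b), note that $S_V$ is compact since $V(z)\to\infty$ as $|z|\to\infty$; fix $R$ with $S_V\subseteq\{|z|\le R\}$. For $x\in S_V$, Girsanov's theorem applied to the bounded drift $\eps b_2(\widehat X_\cdot,\mu^\eps_\cdot)$ produces $\mathbb Q\sim\P$ on $\mathcal F_T$, with density $\exp\bigl(-\int_0^T\eps\langle b_2(\widehat X_s,\mu^\eps_s),dW_s\rangle-\tfrac12\int_0^T\eps^2|b_2(\widehat X_s,\mu^\eps_s)|^2\,ds\bigr)$, under which $\widehat X$ solves $d\widehat X_t=b_1(\widehat X_t)\,dt+d\widetilde W_t$ for a $\mathbb Q$-Brownian motion $\widetilde W$; hence $\Law_{\mathbb Q}(\widehat X_T)=\Law(Y^{(x)}_T)$. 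As total variation does not increase under push-forward, and by Pinsker's inequality together with the Girsanov expression for the relative entropy (bounded here by $\tfrac12\eps^2D^2T$), one gets $d_{TV}\bigl(P^T_\mu(\eps)\delta_x,\Law(Y^{(x)}_T)\bigr)\le d_{TV}(\P|_{\mathcal F_T},\mathbb Q|_{\mathcal F_T})\le\eps D\sqrt T$, uniformly in $x$, $\mu$ and $\eps\le\eps_1$. Lemma~\ref{L:Lok_dobr}, applied with this $R$ and $T$, furnishes $\alpha(R,T)>0$ with $d_{TV}\bigl(\Law(Y^{(x)}_T),\Law(Y^{(y)}_T)\bigr)\le2(1-\alpha(R,T))$ for $|x|,|y|\le R$, so the triangle inequality gives, for $x,y\in S_V$,
\[
d_{TV}\bigl(P^T_\mu(\eps)\delta_x,P^T_\mu(\eps)\delta_y\bigr)\le 2\bigl(1-\alpha(R,T)\bigr)+2\eps D\sqrt T.
\]
Choosing finally $\eps_0\in(0,\eps_1]$ so small that $2\eps_0D\sqrt T\le\alpha(R,T)$, one obtains \eqref{dobr} with $\alpha:=\alpha(R,T)/2$, completing the proof.

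Two points deserve care. The constants must be fixed in the acyclic order $c\to(M_0,\eps_1,\beta)\to(\gamma,\widetilde K_1,\widetilde K_2,K)\to S_V\to R\to\alpha(R,T)\to\eps_0$ — so $\beta$ and $K$ (hence $S_V$ and $R$) are chosen before $\eps_0$ is shrunk — and all uniformity over $\mu$ comes from assumption~2) that $b_2$ is bounded. I expect the only genuinely non-routine steps to be the generator computation producing $\beta>0$, where the exponent $c=r/4\wedge1$ is precisely tuned so that the linear drift gain $crV$ beats the quadratic It\^o loss $\tfrac{c^2}{2}V$, and keeping the Girsanov/Pinsker comparison in step~(b) uniform in the frozen flow $(\mu^\eps_t)$, which once more reduces to the bound $|b_2|\le D$.
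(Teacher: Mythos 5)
Your proof is correct and follows essentially the same route as the paper: a Lyapunov drift estimate for $V$ and $V^2$ obtained from It\^o and the Veretennikov--Khasminskii condition, plus a Girsanov/Pinsker comparison of $P^T_\mu(\eps)\delta_x$ with the law of the unperturbed diffusion $Y^{(x)}$ and an appeal to Lemma~\ref{L:Lok_dobr}, with $\eps_0$ shrunk last so that the $\eps D\sqrt T$ error does not swallow $\alpha(R,T)$, and \eqref{est_mera} read off from the time-$t$ Lyapunov bound. The one small economy the paper makes that you don't is to verify \eqref{Lyap2} only (i.e.\ compute the drift for $G=V^2$ alone) and then deduce \eqref{Lyap} from it via Jensen's inequality, $(P^T V^2)^{1/2}\ge P^T V$, which saves the second generator computation; your direct treatment of $V$ is equally valid. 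You also keep the $\tfrac{c(d-1)}{2|z|}$ Laplacian term explicitly and control it by enlarging $M$, which is a point the paper's display elides, so your version is if anything slightly more careful there.
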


\begin{proof}
First, let us prove inequality \eqref{Lyap2}. Denote $G(x):=V^2(x)$ and set $\kappa:=(r/4)\wedge
1$. It follows from the definition of $V$ that $G(x)=\exp(2\kappa|x|)$ for $|x|\ge M$. Fix a measure $\mu$ and let $Z^\eps=(Z_t^\eps)_{t\ge0}$ be a strong solution of the SDE
\begin{equation}\label{vspomogat}
d Z_t^\eps=b_\eps(Z^\eps_t,\mu^\eps_t)\,dt+dW_t,\quad Z_0^\eps=x,
\end{equation}
where  $(X_t^\eps,\mu_t^\eps)_{t\ge0}$ is a strong solution of SMVE \eqref{SMV} with the initial distribution ${\mu_0^\eps=\mu}$. The definition of the process $Z^\eps$ yields
$$
P^t_\mu(\eps) G(x)=\E_x G(Z_t^\eps),\quad x\in\mathbb{R}^d.
$$
Apply Ito's formula to the function $e^{\theta t}G(Z^\eps_t)$, $\theta \in\mathbb{R}$. We derive
\begin{align*}
e^{\theta t}& \E_x G(Z_t^\eps) - G(x)\\
&\le\E_x\int_0^t e^{\theta s}\I(|Z_s^\eps|\ge M) G(Z_s^\eps) \bigl(\theta+2\kappa^2+2\kappa\langle
Z_s^\eps,b_\eps(Z_s^\eps,\mu_s^\eps)\rangle|Z_s^\eps|^{-1}\bigr) \,ds+Ce^{\theta t} \\
&\le (\theta+2\kappa^2-2\kappa(r-\eps D))\E_x\int_0^t e^{\theta s}\I(|Z_s^\eps|\ge M) G(Z_s^\eps)
\,ds+Ce^{\theta t},
\end{align*}
where $C=C(\theta)>0$, and in the second inequality we made use of condition~\eqref{VH} and the boundedness of $b_2$. By taking $\theta=2\kappa(r-\eps D-\kappa)$, we obtain
\begin{equation*}
P^t_\mu(\eps) G(x)=\E_x G(Z_t^\eps) \le e^{-\kappa t r/2} G(x)+K^2,\quad x\in\mathbb{R}^d,\,\, t\ge0,\,\,
0\le\eps\le \frac{r}{2D},
\end{equation*}
where $K= C(\theta)^{1/2}$. This implies \eqref{Lyap2}. Moreover, by Jensen's inequality,
\begin{equation}\label{Vest}
P^t_\mu(\eps) V(x)\le (P^t_\mu(\eps) V^2(x))^{1/2} \le (e^{-\kappa t r/2} V^2(x)+K^2)^{1/2}\le e^{-\kappa t r/4} V(x)+K
\end{equation}
for any $0\le \eps\le r/(2D)$. This yields \eqref{Lyap}.

To prove inequality \eqref{dobr} we consider SDE \eqref{sdu_igr}. We choose sufficiently large $R$ so that
$S_V\subset\{x\in\mathbb{R}^d: |x|\le R\}$. We make use of \eqref{DobrForSDE} to derive
\begin{align}\label{dtvest}
d_{TV}(P^t_\mu(\eps) \delta_x,P^t_\mu(\eps) \delta_y)\le& d_{TV}(P^t_\mu(\eps) \delta_x,\Law(Y_t^{(x)}))+ d_{TV}(\Law(Y_t^{(x)}),\Law(Y_t^{(y)}))\nonumber\\
&+d_{TV}(\Law(Y_t^{(y)}),P^t_\mu(\eps) \delta_y)\nonumber\\
\le& d_{TV}(P^t_\mu(\eps) \delta_x,\Law(Y_t^{(x)})) + d_{TV}(\Law(Y_t^{(y)}),P^t_\mu(\eps) \delta_y)\nonumber\\
&+2(1-\alpha(R,t))
\end{align}
for any $x,y\in S_V$, $t>0$.

Introduce the probability measure  $\P^{\eps,\mu}$ on $(\Omega,\mathcal{F})$ by putting
\begin{equation}\label{mera}
\frac{d {\P^{\eps,\mu}}}{d \P}:=\exp\Bigl(\eps\int_0^t
b_2(Y^{(x)}_s,\mu_s^\eps)\,dW_s-\frac12\eps^2\int_0^t
|b_2(Y^{(x)}_s,\mu_s^\eps)|^2\,ds\Bigr).
\end{equation}
Since the function $b_2$ is bounded, we see that the measure $\P^{\eps,\mu}$ is well-defined. By the Girsanov theorem, the process
\vspace{-1ex}
\begin{equation}\label{newbd}
W_s^{\eps,\mu}:=W_s-\eps\int_0^s b_2(Y^{(x)}_u,\mu_u^\eps)\, du,\,\,0\le s\le t,
\end{equation}
is a $d$-dimensional Brownian motion with respect to $\P^{\eps,\mu}$. Hence,
\begin{equation*}
dY_t^{(x)}=b_\eps(Y_t^{(x)},\mu_t^\eps)\,dt+dW_t^{\eps,\mu}.
\end{equation*}
Using this and \eqref{vspomogat}, we see that $P^t_\mu(\eps)\delta_x=\Law_{\P}(Z_t^\eps)=\Law_{\P^{\eps,\mu}}(Y_t^{(x)})$. By the Pinsker inequality (\cite{Pinsker}), we get
\begin{align*}
d_{TV}(P^t_\mu(\eps)
\delta_x,\Law_{\P}(Y_t^{(x)}))&=d_{TV}(\Law_{\P^{\eps,\mu}}(Y_t^{(x)}),\Law_{\P}(Y_t^{(x)}))\le
d_{TV}(\P^{\eps,\mu}, \P)\\
&\le \sqrt{2\E(\ln d\P/d\P^{\eps,\mu})}\le \eps D\sqrt{t},\quad t>0.
\end{align*}
Similarly, $d_{TV}(P^t_\mu(\eps) \delta_y,\Law_{\P}(Y_t^{(y)}))\le \eps D\sqrt{t}$. Thus, with the help of
\eqref{dtvest} we finally obtain
\begin{equation*}
d_{TV}(P^t_\mu(\eps) \delta_x,P^t_\mu(\eps) \delta_y)\le 2(1-\alpha(R,t))+2\eps D\sqrt{t}, \quad x,y\in S_V,
t>0.
\end{equation*}
This yields \eqref{Lyap}--\eqref{dobr} with $T=1$ and $\eps_0=\frac{\alpha(R,1)}{2D}  \wedge \frac {r}{2D}$.

\vspace{0.5ex} To complete the proof, it remains to note that estimate \eqref{est_mera} directly follows from \eqref{Vest}:
\begin{equation*}
(P^{t}_\mu(\eps)\mu)(V)=\int_E V(x)\,(P^{t}_\mu(\eps)\mu)(dx)=\int_E P^{t}_\mu(\eps)V(x)\,\mu(dx)\le
\mu(V)+K.\qedhere
\end{equation*}
\end{proof}

\begin{Lemma}\label{L:svoistvo_ravnomerki}
Suppose the conditions of Theorem~$\ref{Th:mv}$ are satisfied. If a measure $\pi$ is invariant for nonlinear operator $P_{\sbt}^T(\eps) \sbt$ (that is $P_\pi^T(\eps) \pi=\pi$), then $\pi(V)\le K$.

Here $\eps\in[0,\eps_0]$; the constants $T$, $\eps_0$, and $K$ are the same as in Lemma~$\ref{L:3Proverka}$.
\end{Lemma}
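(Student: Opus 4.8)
The plan is to combine the Lyapunov-type bound \eqref{Vest} obtained inside the proof of Lemma~\ref{L:3Proverka} with the invariance of $\pi$, truncating $V$ at a finite level so as not to presuppose that $\pi(V)$ is finite.

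First I would observe that the hypothesis $P^T_\pi(\eps)\pi=\pi$ self-propagates along multiples of $T$: by time-homogeneity and uniqueness of the strong solution of \eqref{SMV}, if the flow \eqref{SMV} is started from $\pi$ then its marginal at each time $nT$ equals $\pi$, so $P^{nT}_\pi(\eps)\pi=\pi$ for every $n\in\mathbb{Z}_+$. Since $\eps\le\eps_0\le r/(2D)$, inequality \eqref{Vest} applies with the fixed measure $\mu=\pi$ at every time $t=nT$, giving, with $\kappa:=(r/4)\wedge 1$,
\begin{equation*}
P^{nT}_\pi(\eps)V(x)\le e^{-\kappa nTr/4}V(x)+K,\qquad x\in\mathbb{R}^d,\ n\in\mathbb{N}.
\end{equation*}

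Next, fix a level $N\ge K$ and set $V_N:=V\wedge N$. Since $V_N$ is bounded and $P^{nT}_\pi(\eps)\pi=\pi$, we have $\pi(V_N)=\int_{\mathbb{R}^d}P^{nT}_\pi(\eps)V_N\,d\pi$. Bounding $P^{nT}_\pi(\eps)V_N$ from above both by $P^{nT}_\pi(\eps)V$ (monotonicity of the kernel) and by $P^{nT}_\pi(\eps)N=N$, one obtains
\begin{equation*}
\pi(V_N)\le\int_{\mathbb{R}^d}\bigl((e^{-\kappa nTr/4}V(x)+K)\wedge N\bigr)\,\pi(dx).
\end{equation*}
Now I would let $n\to\infty$ for fixed $N$: the integrand decreases pointwise to $K\wedge N=K$ and is dominated by the constant $N$, which is $\pi$-integrable, so dominated convergence gives $\pi(V_N)\le K$ for every $N\ge K$. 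Letting $N\to\infty$ and using monotone convergence ($V_N\uparrow V$) yields $\pi(V)\le K$.

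The main subtlety — and the reason for the truncation — is that one cannot simply integrate $P^T_\pi(\eps)V\le\gamma V+K$ against $\pi$: that step presupposes $\pi(V)<\infty$ and in any case would only deliver the weaker bound $K/(1-\gamma)$. Passing to the iterates $P^{nT}_\pi(\eps)$ and sending $n\to\infty$ \emph{before} sending $N\to\infty$ is precisely what simultaneously forces $\pi(V)<\infty$ and produces the sharp constant $K$; the one thing to be careful about is the order of these two limits (and the entirely elementary domination needed for the dominated-convergence step). The propagation $P^{nT}_\pi(\eps)\pi=\pi$ is routine but should be stated explicitly, as should the fact that \eqref{Vest} is available for all $\eps\le\eps_0$.
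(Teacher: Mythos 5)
Your proof is correct and follows essentially the same route as the paper: propagate the fixed-point identity to $P^{nT}_\pi(\eps)\pi=\pi$, truncate $V$ at a finite level (your $N$, the paper's $S$), integrate the iterated Lyapunov bound \eqref{Vest} against $\pi$, send $n\to\infty$ by dominated convergence to get $\pi(V\wedge N)\le K$, and then let the truncation level go to infinity (you invoke monotone convergence, the paper Fatou — equivalent here). The only cosmetic difference is that the paper first splits off the $K$ via $(a+K)\wedge S\le K+(a\wedge S)$ and shows the remaining integral vanishes, whereas you apply dominated convergence directly to $(e^{-\kappa nTr/4}V+K)\wedge N$; both are valid.
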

\begin{proof}
Fix $S>0$. Since the measure $\pi$ is invariant, we have $P^{nT}_{\pi}(\eps)\pi=\pi$ for any positive integer $n$. Therefore, using \eqref{Vest}, we derive
\begin{align*}
\int_{\mathbb{R}^d} (V(x)\wedge S)\,\pi(dx)&=\int_{\mathbb{R}^d} P^{nT}_{\pi}(\eps)(V(x)\wedge
S)\,\pi (dx)\\
&\le \int_{\mathbb{R}^d} (P^{nT}_{\pi}(\eps)V(x)\wedge S)\,\pi(dx)\\
&\le \int_{\mathbb{R}^d} ((e^{-\kappa nT r/4}V(x)+K)\wedge S)\,\pi (dx)\\
&\le K+\int_{\mathbb{R}^d} (e^{-\kappa nT r/4}V(x)\wedge S)\,\pi (dx),
\end{align*}
where $\kappa=r/4\wedge 1$. By  Lebesgue's dominated convergence theorem, the integral in the right-hand side of the above inequality tends to $0$ as $n\!\to\infty$. Hence
$$
\int_{\mathbb{R}^d} (V(x)\wedge S)\,\pi(dx)\!\le\! K.
$$
By taking the limit as $S\to\infty$ and applying Fatou's lemma, we obtain $\pi(V)\le K$.
\end{proof}

\begin{Lemma}\label{L:3}
Suppose the conditions of Theorem~$\ref{Th:mv}$ are satisfied. Let $(X^{\eps,\mu}_t,\mu_t^\eps)_{t\ge0}$ and
$(X^{\eps,\nu}_t,\nu_t^\eps)_{t\ge0}$ be the strong solutions of SMVE \eqref{SMV} with initial conditions distributed as $\mu_0$ and $\nu_0$, respectively. Then
\begin{equation}\label{distmu}
d_{TV}(\mu_t^\eps,\nu_t^\eps)\le\sqrt 2   d_{TV}(\mu_0,\nu_0)e^{4\eps^2L^2 t},\quad t\ge 0.
\end{equation}
\end{Lemma}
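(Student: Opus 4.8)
The plan is to couple the two McKean--Vlasov solutions through a Girsanov change of measure, exactly as in the proof of \eqref{dobr} in Lemma~\ref{L:3Proverka}, but now comparing the two nonlinear flows against each other rather than against the auxiliary linear SDE \eqref{sdu_igr}. The starting observation is that $\mu_t^\eps = \Law(X_t^{\eps,\mu})$ and $\nu_t^\eps = \Law(X_t^{\eps,\nu})$, where both $X^{\eps,\mu}$ and $X^{\eps,\nu}$ are driven by the \emph{same} Brownian motion $W$ but with the \emph{frozen} drifts $b_\eps(\,\cdot\,,\mu_s^\eps)$ and $b_\eps(\,\cdot\,,\nu_s^\eps)$, respectively (frozen because once $\mu_0,\nu_0$ are fixed, the marginal flows $(\mu_s^\eps)$, $(\nu_s^\eps)$ are deterministic). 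First I would build a coupling of the initial conditions $X_0^{\eps,\mu}$, $X_0^{\eps,\nu}$ that is optimal for $d_{TV}$, i.e. $\P(X_0^{\eps,\mu}\ne X_0^{\eps,\nu}) = \tfrac12 d_{TV}(\mu_0,\nu_0)$; on the event where they coincide, run both processes with the linear part $b_1$ and the \emph{same} perturbation drift, and absorb the discrepancy $\eps(b_2(\,\cdot\,,\mu_s^\eps)-b_2(\,\cdot\,,\nu_s^\eps))$ by a Girsanov shift. Concretely, one process solves an SDE with drift $b_1(x)+\eps b_2(x,\mu_s^\eps)$ under $\P$, and the Radon--Nikodym density
$$
\frac{d\widetilde\P}{d\P}=\exp\Bigl(\eps\int_0^t \langle b_2(X_s,\mu_s^\eps)-b_2(X_s,\nu_s^\eps),dW_s\rangle-\tfrac12\eps^2\int_0^t|b_2(X_s,\mu_s^\eps)-b_2(X_s,\nu_s^\eps)|^2\,ds\Bigr)
$$
turns it into a solution with drift $b_1(x)+\eps b_2(x,\nu_s^\eps)$ under $\widetilde\P$; this density is well defined since $b_2$ is bounded.

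Next I would estimate the total-variation distance. On the coincidence event, $\Law_{\widetilde\P}(X_{[0,t]})=\Law_\P(X_{[0,t]}^{\eps,\nu})$ restricted appropriately, so by the triangle inequality and the Pinsker inequality (as used already in Lemma~\ref{L:3Proverka}),
$$
d_{TV}(\mu_t^\eps,\nu_t^\eps)\le d_{TV}(\mu_0,\nu_0)+\sqrt{2\,\E\bigl(\ln d\widetilde\P/d\P\bigr)}\,,
$$
and the relative entropy term is controlled by the Lipschitz bound \eqref{lip}: $|b_2(x,\mu_s^\eps)-b_2(x,\nu_s^\eps)|\le L\rho_2(\mu_s^\eps,\nu_s^\eps)\le L\,d_{TV}(\mu_s^\eps,\nu_s^\eps)$, using that $\rho_2\le d_{TV}$. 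Thus the entropy is at most $\tfrac12\eps^2L^2\int_0^t d_{TV}(\mu_s^\eps,\nu_s^\eps)^2\,ds$. This would give an integral inequality for $\varphi(t):=d_{TV}(\mu_t^\eps,\nu_t^\eps)$ of the form $\varphi(t)\le \varphi(0)+\bigl(2\eps^2L^2\int_0^t\varphi(s)^2\,ds\bigr)^{1/2}$; squaring and using $(a+b)^2\le 2a^2+2b^2$ yields $\varphi(t)^2\le 2\varphi(0)^2+4\eps^2L^2\int_0^t\varphi(s)^2\,ds$, and Gr\"onwall's inequality applied to $\varphi^2$ gives $\varphi(t)^2\le 2\varphi(0)^2 e^{4\eps^2L^2 t}$, i.e. exactly \eqref{distmu}.

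The main obstacle I anticipate is the bookkeeping in the coupling at the level of path space: one must combine the $d_{TV}$-optimal coupling of the initial data with the Girsanov coupling on the coincidence event in a way that legitimately yields the single clean bound $d_{TV}(\mu_t^\eps,\nu_t^\eps)\le d_{TV}(\mu_0,\nu_0)+\sqrt{2\,\E(\ln d\widetilde\P/d\P)}$ rather than a weaker product-type estimate. A robust way to handle this is to work on an enlarged space with an auxiliary Bernoulli variable selecting the branch, and to note that $d_{TV}$ of the time-$t$ marginals is bounded by $d_{TV}$ of the path-space laws, which in turn splits as the initial-data mismatch plus a Girsanov/Pinsker term; since the density depends only on the deterministic flows $(\mu_s^\eps),(\nu_s^\eps)$ and not on which branch we are on, the two contributions add rather than multiply. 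A secondary technical point, already dealt with identically in Lemma~\ref{L:3Proverka}, is verifying Novikov's condition for $d\widetilde\P/d\P$, which is immediate from the uniform bound $|b_2|\le D$.
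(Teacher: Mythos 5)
Your proposal follows the same route as the paper: a Girsanov change of measure that trades the nonlinear perturbation $\eps b_2$ for a Radon--Nikodym density, Pinsker's inequality to control the resulting total-variation distance, the Lipschitz bound on $b_2$ in its measure argument, and Gronwall applied to $d_{TV}^2$. The only structural difference is in how the initial mismatch is isolated. You propose an optimal coupling of $X_0^{\eps,\mu}$ and $X_0^{\eps,\nu}$ on path space and worry about the bookkeeping; the paper instead splits $\mu_0,\nu_0$ through $\eta(dx)=(d\mu_0/d\nu_0\wedge 1)\,\nu_0(dx)$, bounds $d_{TV}\bigl(P_{\mu_0}^t(\eps)(\mu_0-\eta),P_{\nu_0}^t(\eps)(\nu_0-\eta)\bigr)\le d_{TV}(\mu_0,\nu_0)$ trivially, and reduces the remaining piece to $\sup_x d_{TV}\bigl(P_{\mu_0}^t(\eps)\delta_x,P_{\nu_0}^t(\eps)\delta_x\bigr)$. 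This is exactly the additive decomposition you were after, and it (or, even more simply, the triangle inequality $d_{TV}(P^t_{\mu_0}\mu_0,P^t_{\nu_0}\nu_0)\le d_{TV}(P^t_{\mu_0}\mu_0,P^t_{\mu_0}\nu_0)+d_{TV}(P^t_{\mu_0}\nu_0,P^t_{\nu_0}\nu_0)$ together with the contraction of $d_{TV}$ under the frozen Markov kernel $P^t_{\mu_0}(\eps)$) gives the clean bound directly; the auxiliary Bernoulli branch variable is unnecessary overhead.

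One step you state as a fact, ``$\rho_2\le d_{TV}$'', deserves a closer look. The paper makes the identical jump — it passes from $|b_2(\cdot,\mu_s^\eps)-b_2(\cdot,\nu_s^\eps)|\le L\,\rho_2(\mu_s^\eps,\nu_s^\eps)$ straight to an integral of $d_{TV}(\mu_s^\eps,\nu_s^\eps)^2$ — so this is a gap shared with the paper, not one you introduced. With the definition $\rho_2(\mu,\nu)=\bigl(\inf_\lambda\int|x-y|^2\wedge 1\,d\lambda\bigr)^{1/2}$, the optimal total-variation coupling only yields $\rho_2(\mu,\nu)^2\le d_{TV}(\mu,\nu)/2$, hence $\rho_2\le\sqrt{d_{TV}/2}$, and this is \emph{not} bounded by $d_{TV}$ when $d_{TV}<1/2$: for $\mu=(1-p)\delta_0+p\delta_1$, $\nu=\delta_0$ with small $p$, one has $\rho_2=\sqrt p$ and $d_{TV}=2p$, so $\rho_2\gg d_{TV}$. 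Plugging the correct bound $\rho_2^2\le d_{TV}/2$ into Pinsker gives $\varphi(t)^2\le 2\varphi(0)^2+\eps^2L^2\int_0^t\varphi(s)\,ds$ for $\varphi=d_{TV}(\mu_\cdot^\eps,\nu_\cdot^\eps)$, which integrates only to the additive estimate $\varphi(t)\le\sqrt 2\,\varphi(0)+\eps^2L^2t/2$, not to the multiplicative exponential estimate \eqref{distmu} that the proof of Lemma \ref{L:5estimate} needs. So either the Lipschitz hypothesis \eqref{lip} should be imposed with $d_{TV}$ in place of $\rho_2$, or this inequality needs another justification; in your write-up, as in the paper's, it is asserted without proof. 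A minor cosmetic point: Gronwall actually gives $\varphi(t)^2\le 2\varphi(0)^2e^{4\eps^2L^2t}$ and hence $\varphi(t)\le\sqrt 2\,\varphi(0)e^{2\eps^2L^2t}$, which is strictly stronger than \eqref{distmu}, not ``exactly'' it.
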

\begin{proof}
As in the proof of Theorem~\ref{Th:DiscreteTime}, consider the measure $\eta(dx):=(d\mu_0/d\nu_0\wedge 1)\nu_0(dx)$, where $d\mu_0/d\nu_0$ is a  Radon--Nikodym derivative of absolutely continuous part of $\mu_0$
 with respect to $\nu_0$. Then
\begin{align}\label{l4mainest}
d_{TV}(\mu_t^\eps,\nu_t^\eps)&=d_{TV}(P_{\mu_0}^t(\eps)\mu_0,P_{\nu_0}^t(\eps)\nu_0)\nonumber\\
&\le d_{TV}(P_{\mu_0}^t(\eps)\eta,P_{\nu_0}^t(\eps)\eta) +
d_{TV}(P_{\mu_0}^t(\eps)(\mu_0-\eta),P_{\nu_0}^t(\eps)(\nu_0-\eta))\nonumber\\
&\le(2-d_{TV}(\mu_0,\nu_0))\sup_{x\in\mathbb{R}^d} d_{TV}(P_{\mu_0}^t(\eps)\delta_x,P_{\nu_0}^t(\eps)\delta_x)+
d_{TV}(\mu_0,\nu_0),
\end{align}
where in the last inequality we used $\eta(\mathbb{R}^d)=1-d_{TV}(\mu_0,\nu_0)/2$.

Using an argument close to that of the proof of Lemma~\ref{L:3Proverka}, let us estimate the total variation distance $d_{TV}(P_{\mu_0}^t(\eps)\delta_x,P_{\nu_0}^t(\eps)\delta_x)$. Fix $x\in\mathbb{R}^d$. Let
$(Y_t^{(x)})_{t\ge0}$ be the strong solution of SDE \eqref{sdu_igr}. Define a measure $\P^{\eps,\mu}$ on
$(\Omega,\mathcal{F})$ by formula \eqref{mera} and introduce the measure $\P^{\eps,\nu}$ in a similar way (with the corresponding substitution $\nu_s^\eps$ for $\mu_s^\eps$). By the Girsanov theorem, the process $(W_s^{\eps,\mu})_{0\le s\le t}$ defined by \eqref{newbd} and the process $(W_s^{\eps,\nu})_{0\le s\le t}$ defined similarly are $d$-dimensional Brownian motions with respect to the measures $\P^{\eps,\mu}$ and $\P^{\eps,\nu}$, correspondingly. As in the proof of Lemma~\ref{L:3Proverka}, we see that
$P_{\mu_0}^t(\eps)\delta_x=\Law_{\P^{\eps,\mu}}(Y_t^{(x)})$ and $P_{\nu_0}^t(\eps)\delta_x=\Law_{\P^{\eps,\nu}}(Y_t^{(x)})$.

Denote $\rho:=d\P^{\eps,\mu}/d\P^{\eps,\nu}$. By the Pinsker inequality, we get
\begin{align*}
d_{TV} &(P_{\mu_0}^t(\eps)\delta_x,P_{\nu_0}^t(\eps)\delta_x)=
d_{TV} (\Law_{\P^{\eps,\mu}}(Y_t^{(x)}),\Law_{\P^{\eps,\nu}}(Y_t^{(x)}))\le d_{TV} (\P^{\eps,\mu},\P^{\eps,\nu})\\
&\le \sqrt{2 \E^{\P^{\eps,\mu}} (\ln \rho)}=\eps\Bigl(\E^{\P^{\eps,\mu}} \int_0^t  \bigl|b_2(Y_s^{(x)},\mu^\eps_s)-b_2(Y_s^{(x)},\nu^\eps_s)\bigr|^2 \,ds \Bigr)^{1/2}\\
&\le \eps L \Bigl( \int_0^t  \bigl(d_{TV}(\mu^\eps_s, \nu^\eps_s)\bigr)^2 \,ds \Bigr)^{1/2}.
\end{align*}
Note that the right-hand side of this inequality does not depend on $x$. Combining this estimate with \eqref{l4mainest}, and using inequality $(a+b)^2\le 2 a^2+2b^2$, which holds for all real $a,b$, we obtain
\vspace{-2ex}
\begin{equation*}
(d_{TV}(\mu^\eps_t,\nu^\eps_t))^2\le 2 (d_{TV}(\mu_0,\nu_0))^2+2\eps^2L^2 (2-d_{TV}(\mu_0,\nu_0))^2 \int_0^t
\bigl(d_{TV}(\mu^\eps_s, \nu^\eps_s)\bigr)^2 \,ds.
\end{equation*}
Now the application of Gronwall's lemma to the function $\psi(t):=(d_{TV}(\mu^\eps_t,\nu^\eps_t))^2$ yields \eqref{distmu}.
\end{proof}

\begin{Lemma}\label{L:5estimate}
Suppose the conditions of Theorem~$\ref{Th:mv}$ are satisfied. Let $\eps\in[0,\eps_0]$, where the constant  $\eps_0$ is the same as in Lemma~$\ref{L:3Proverka}$. Let $\mu_0, \nu_0, \zeta\in\mathcal{P}(\mathbb{R}^d)$. Then
\begin{equation}\label{mainstatementl5}
d_{1+\beta V} \bigl(P^T_{\mu_0}(\eps) \zeta,P^T_{\nu_0}(\eps)\zeta\bigr) \le C\eps (1+\beta)
(1+\zeta(V))d_{1+\beta V} (\mu_0,\nu_0),
\end{equation}
where $\beta\ge0$; $T$ is the same as in Lemma~$\ref{L:3Proverka}$; and $C>0$ depends only on $L,T$ and $\eps_0$.
\end{Lemma}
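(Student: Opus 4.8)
The plan is to reduce the estimate \eqref{mainstatementl5} to a pointwise bound on the (Markov) transition kernels $Q_1:=P^T_{\mu_0}(\eps)$ and $Q_2:=P^T_{\nu_0}(\eps)$, and then to obtain that bound via the Girsanov change of measure already used in the proofs of Lemmas~\ref{L:3Proverka} and~\ref{L:3}, together with the Cauchy--Schwarz inequality. Since
\begin{equation*}
d_{1+\beta V}(Q_1\zeta,Q_2\zeta)=\sup_{|g|\le1+\beta V}\int_{\mathbb{R}^d}\bigl(Q_1g-Q_2g\bigr)(x)\,\zeta(dx),
\end{equation*}
it suffices to show that for every $g$ with $|g|\le1+\beta V$ and every $x\in\mathbb{R}^d$ one has $|Q_1g(x)-Q_2g(x)|\le C\eps(1+\beta)(1+V(x))\,d_{TV}(\mu_0,\nu_0)$ with $C$ independent of $x$ and $g$: integrating this against $\zeta$ produces the factor $1+\zeta(V)$, and $d_{TV}(\mu_0,\nu_0)\le d_{1+\beta V}(\mu_0,\nu_0)$ gives the assertion.

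To prove the pointwise bound I would fix $x$, let $(Y_t^{(x)})_{t\ge0}$ be the strong solution of the auxiliary SDE \eqref{sdu_igr}, and introduce, exactly as in the proof of Lemma~\ref{L:3}, the measures $\P^{\eps,\mu}$ and $\P^{\eps,\nu}$ given by \eqref{mera} (the second with $\nu^\eps_s$ in place of $\mu^\eps_s$), so that $Q_1\delta_x=\Law_{\P^{\eps,\mu}}(Y_T^{(x)})$ and $Q_2\delta_x=\Law_{\P^{\eps,\nu}}(Y_T^{(x)})$. Writing $\rho:=d\P^{\eps,\mu}/d\P^{\eps,\nu}$ on $\mathcal{F}_T$, one gets $Q_1g(x)-Q_2g(x)=\E^{\P^{\eps,\nu}}\bigl[g(Y_T^{(x)})\,(\rho-1)\bigr]$, hence, since $|g|\le1+\beta V$, by Cauchy--Schwarz
\begin{equation*}
\bigl|Q_1g(x)-Q_2g(x)\bigr|\le\Bigl(\E^{\P^{\eps,\nu}}\bigl[(1+\beta V(Y_T^{(x)}))^2\bigr]\Bigr)^{1/2}\Bigl(\E^{\P^{\eps,\nu}}\bigl[(\rho-1)^2\bigr]\Bigr)^{1/2}.
\end{equation*}
The first factor equals $\bigl(Q_2(1+\beta V)^2(x)\bigr)^{1/2}$; since $1+\beta V\le(1+\beta)(1+V)$ we have $(1+\beta V)^2\le2(1+\beta)^2(1+V^2)$, and then inequality \eqref{Lyap2} of Lemma~\ref{L:3Proverka} bounds this factor by $\sqrt2\,(1+\beta)\sqrt{1+K^2}\,(1+V(x))$, uniformly in $x$.

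For the second factor, note that under $\P^{\eps,\nu}$ the density process $\rho_s$ is the stochastic exponential of $\eps\int_0^s\bigl(b_2(Y_u^{(x)},\mu^\eps_u)-b_2(Y_u^{(x)},\nu^\eps_u)\bigr)\,dW_u^{\eps,\nu}$, so that $\rho-1=\eps\int_0^T\rho_s\bigl(b_2(Y_s^{(x)},\mu^\eps_s)-b_2(Y_s^{(x)},\nu^\eps_s)\bigr)\,dW_s^{\eps,\nu}$. By \eqref{lip} with coinciding spatial arguments, together with the inequality $\rho_2(\mu^\eps_s,\nu^\eps_s)\le d_{TV}(\mu^\eps_s,\nu^\eps_s)$ used in the proof of Lemma~\ref{L:3}, the integrand satisfies $|b_2(Y_s^{(x)},\mu^\eps_s)-b_2(Y_s^{(x)},\nu^\eps_s)|\le L\,d_{TV}(\mu^\eps_s,\nu^\eps_s)$ with a deterministic right-hand side, so Ito's isometry gives
\begin{equation*}
\E^{\P^{\eps,\nu}}\bigl[(\rho-1)^2\bigr]\le\eps^2L^2\int_0^T d_{TV}(\mu^\eps_s,\nu^\eps_s)^2\,\E^{\P^{\eps,\nu}}\bigl[\rho_s^2\bigr]\,ds.
\end{equation*}
Since $b_2$ is bounded by $D$ (condition 2 of Theorem~\ref{Th:mv}), $\E^{\P^{\eps,\nu}}[\rho_s^2]$ is bounded by a constant depending only on $D$, $T$, $\eps_0$ (write $\rho_s^2$ as a factor at most $\exp(4D^2T\eps_0^2)$ times an exponential $\P^{\eps,\nu}$-martingale of mean one), while Lemma~\ref{L:3} yields $d_{TV}(\mu^\eps_s,\nu^\eps_s)^2\le2e^{8\eps_0^2L^2T}d_{TV}(\mu_0,\nu_0)^2$. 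Hence $\E^{\P^{\eps,\nu}}[(\rho-1)^2]\le C_1\eps^2 d_{TV}(\mu_0,\nu_0)^2$ for some $C_1=C_1(L,T,D,\eps_0)$. Inserting the two factors into the Cauchy--Schwarz bound and integrating in $\zeta$ then yields \eqref{mainstatementl5}, with a constant $C$ depending only on $L$, $T$ and $\eps_0$ (the remaining constants $D$, $K$, $r$ being fixed once and for all in Lemma~\ref{L:3Proverka}).

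The hardest part is to obtain simultaneously the first power of $\eps$ and the first power of $d_{TV}(\mu_0,\nu_0)$: a crude Pinsker-type estimate of $d_{TV}(\P^{\eps,\mu},\P^{\eps,\nu})$, of the kind used in the proof of Lemma~\ref{L:3}, would deliver the correct power of $\eps$ but only $\sqrt{d_{TV}(\mu_0,\nu_0)}$. This is why one works with the $L^2$ quantity $\E^{\P^{\eps,\nu}}[(\rho-1)^2]$ and uses that $\rho-1$ is a stochastic integral started at $0$, so that Ito's isometry extracts the factor $\eps^2$ and Lemma~\ref{L:3} converts $\int_0^T d_{TV}(\mu^\eps_s,\nu^\eps_s)^2\,ds$ into a constant times $d_{TV}(\mu_0,\nu_0)^2$. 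A secondary but essential point is that the two moment bounds — $\E^{\P^{\eps,\nu}}[\rho_s^2]$ and $Q_2 V^2$ via \eqref{Lyap2} — are uniform in the base point $x$; this uniformity is precisely what makes the $\zeta$-integration produce exactly $1+\zeta(V)$.
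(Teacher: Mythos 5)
Your proposal is correct and follows the same overall strategy as the paper: reduce to Dirac initial laws, pass to the Girsanov densities $\P^{\eps,\mu}$, $\P^{\eps,\nu}$ so that $P^T_{\mu_0}(\eps)\delta_x=\Law_{\P^{\eps,\mu}}(Y_T^{(x)})$, apply Cauchy--Schwarz to split off $\bigl(\E^{\P^{\eps,\nu}}(1+\beta V(Y_T^{(x)}))^2\bigr)^{1/2}$ (controlled uniformly in $x$ by \eqref{Lyap2}) and $\bigl(\E^{\P^{\eps,\nu}}(\rho-1)^2\bigr)^{1/2}$, and finally integrate against $\zeta$ and use $d_{TV}\le d_{1+\beta V}$. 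The one genuinely different step is the treatment of $\E^{\P^{\eps,\nu}}(\rho-1)^2$: the paper writes out $\rho^2$ explicitly as an exponential, uses Cauchy--Schwarz to bound $\E^{\P^{\eps,\nu}}\rho^2$ by $\exp\bigl(6\eps^2\int_0^T|\phi(s)|^2\,ds\bigr)$ (with $\phi(s)=b_2(Y_s^{(x)},\mu^\eps_s)-b_2(Y_s^{(x)},\nu^\eps_s)$ deterministic by the Lipschitz bound and Lemma~\ref{L:3}), and then extracts the factors $\eps^2$ and $d_{TV}(\mu_0,\nu_0)^2$ via $\E(\rho-1)^2=\E\rho^2-1$ and the elementary inequality $e^a-1\le ae^a$. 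You instead observe that $\rho_s$ solves $d\rho_s=\eps\rho_s\phi(s)\,dW_s^{\eps,\nu}$, apply It\^o's isometry to $\rho_T-1$, and bound $\E^{\P^{\eps,\nu}}\rho_s^2$ uniformly via $\sup|b_2|\le D$. Both routes need exactly the same inputs (Lipschitz bound on $\phi$, boundedness of $b_2$, Lemma~\ref{L:3}) and yield the same conclusion; yours makes the mechanism that linearizes $\eps$ and $d_{TV}(\mu_0,\nu_0)$ (``$\rho-1$ is a stochastic integral starting from $0$'') more transparent, at the cost of invoking the separate moment bound on $\rho_s$ that the paper avoids by working directly with the terminal-time second moment of $\rho$.
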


\begin{proof}
We begin by observing that
\begin{equation}\label{ubralizetu}
d_{1+\beta V} \bigl(P^T_{\mu_0}(\eps) \zeta,P^T_{\nu_0}(\eps)\zeta\bigr)\le \int_{\mathbb{R}^d}d_{1+\beta V}
\bigl(P^T_{\mu_0}(\eps) \delta_x,P^T_{\nu_0}(\eps)\delta_x\bigr)\,\zeta(dx).
\end{equation}
Fix $x\in\mathbb{R}^d$. Arguing as in the proof of Lemma~\ref{L:3} and using the same notation, we see that ${P^T_{\mu_0}(\eps) \delta_x=\Law_{\P^{\eps,\mu}}}(Y_T^{(x)})$ and $P^T_{\nu_0}(\eps) \delta_x=\Law_{\P^{\eps,\nu}}(Y_T^{(x)})$. Therefore
\begin{align}\label{nach_dok}
d_{1+\beta V} \bigl(P^T_{\mu_0}(\eps) \delta_x,P^T_{\nu_0}(\eps)\delta_x\bigr)&=
d_{1+\beta V} (\Law_{\P^{\eps,\mu}}(Y_T^{(x)}),\Law_{\P^{\eps,\nu}}(Y_T^{(x)}))\nonumber\\
&=\sup_ {f:\,\, |f|\le1+\beta V}
\bigl(E^{\P^{\eps,\mu}}f(Y_T^{(x)})-E^{\P^{\eps,\nu}}f(Y_T^{(x)})\bigr)\nonumber\\
&=\sup_ {f:\,\, |f|\le1+\beta V} E^{\P^{\eps,\nu}}f(Y_T^{(x)})(\rho-1)\nonumber\\
&\le \bigl(\E^{\P^{\eps,\nu}} (1+\beta
V(Y_T^{(x)}))^2\bigr)^{1/2}\,\bigl(\E^{\P^{\eps,\nu}}(\rho-1)^2\bigr)^{1/2},
\end{align}
where $\rho:=d\P^{\eps,\mu}/d\P^{\eps,\nu}$. It follows from  \eqref{Lyap2} that
\begin{align}\label{firstterm}
\E^{\P^{\eps,\nu}} (1+\beta V(Y_T^{(x)}))^2 \le 2 + 2\beta^2 P^T_{\nu_0}(\eps) V(x)^2  \le 2\beta^2\gamma^2
V(x)^2 +2 +2\beta^2K^2.
\end{align}
To estimate the second factor in the right-hand side of \eqref{nach_dok}, we use the same technique as in \cite{MisV}. Denote $\phi(s):=b_2(Y_s^{(x)},\mu^\eps_s)- b_2(Y_s^{(x)},\nu^\eps_s)$. Then, we have
\begin{align*}
\E^{\P^{\eps,\nu}} \rho^2&=\E^{\P^{\eps,\nu}}\exp\Bigl(2\eps\int_0^T
\phi(s) \, dW_s^{\eps,\nu}-\eps^2 \int_0^T  |\phi(s)|^2\,ds\Bigr)\\
&\le \E^{\P^{\eps,\nu}}\exp\Bigl(6\eps^2 \int_0^T  |\phi(s)|^2\,ds\Bigr)\\
&\le \exp\Bigl(6\eps^2 L^2\int_0^T  \bigl(d_{TV}(\mu_s^\eps,\nu_s^\eps)\bigr)^2\,ds\Bigr)\\
&\le \exp\Bigl(12 \eps^2 L^2 T (d_{TV}(\mu_0,\nu_0))^2e^{8\eps^2L^2 T}\Bigr),
\end{align*}
where the second inequality follows from the Cauchy--Schwarz inequality; and in the last inequality we used
Lemma~\ref{L:3}. Since $\eps\le\eps_0$ and $e^a-1\le a e^a$ for any real $a$, we derive
\begin{align*}
\bigl(\E^{\P^{\eps,\nu}}(\rho-1)^2\bigr)^{1/2}\le 4\eps L \sqrt{T}
d_{TV}(\mu_0,\nu_0)\exp\{4\eps_0^2L^2 T(1+6e^{8\eps_0^2L^2 T})\}.
\end{align*}
Combining this with \eqref{nach_dok} and \eqref{firstterm}, we finally get
\begin{equation*}
d_{1+\beta V} \bigl(P^T_{\mu_0}(\eps) \delta_x,P^T_{\nu_0}(\eps)\delta_x\bigr) \le C\eps (1+\beta)
(1+V(x))d_{TV}(\mu_0,\nu_0),
\end{equation*}
where $C>0$ depends only on $L$, $T$ and $\eps_0$. This, together with \eqref{ubralizetu}, implies
\eqref{mainstatementl5}.
\end{proof}

\begin{proof}[Proof of Theorem~$\ref{Th:mv}$]
First of all, we observe that by Lemmas~\ref{L:2HM}~and~\ref{L:3Proverka} there exist $\beta>0$ and $\lambda\in[0,1)$ such that
\begin{equation*}
d_{1+\beta V}(P^T_\zeta (\eps)\mu ,P^T_\zeta (\eps)\nu )\le \lambda d_{1+\beta V}(\mu, \nu)
\end{equation*}
for any $\mu,\nu,\zeta\in\mathcal{P}(\mathbb{R}^d)$ and $\eps\in[0,\eps_0]$, where $\eps_0$ and $T$ are the same as in Lemma~\ref{L:3Proverka}. Therefore, Lemma~\ref{L:5estimate} yields
\begin{align*}
d_{1+\beta V}(P^T_\mu  (\eps)\mu,P^T_\nu  (\eps) \nu)&\le d_{1+\beta V}(P^T_\mu  (\eps) \mu,P^T_\mu
(\eps)\nu)+d_{1+\beta V}(P^T_\mu (\eps) \nu,P^T_\nu (\eps)\nu)\\
&\le(C\eps (1+\beta) (1+\nu(V))+ \lambda )d_{1+\beta V}(\mu, \nu)
\end{align*}
for any measures $\mu,\nu\in\mathcal{P}(\mathbb{R}^d)$. Iteratively applying this inequality $n$ times and taking into account \eqref{est_mera}, we get
\begin{align}\label{wdtv_okon}
d_{1+\beta V}(P^{nT}_\mu  (\eps)\mu,P^{nT}_\nu  (\eps) \nu)&\le (C\eps (1+\beta) (1+\max_{0\le i\le n} V_i)+
\lambda )^n d_{1+\beta V}(\mu, \nu)\nonumber\\
&\le \theta(\eps,\nu)^n  d_{1+\beta V}(\mu, \nu),
\end{align}
where we denoted $V_i:=(P^{iT}_\nu(\eps)\nu)(V)$ and $\theta(\eps,\nu):=\lambda+C\eps (1+\beta) (1+K+\nu(V))$.

Consider a measure $\nu_0\in\mathcal{P}(\mathbb{R}^d)$ such that $\int_{\mathbb{R}^d} e^x \,\nu_0(dx)<\infty$. It follows from the definition of the function $V$ that $\nu_0(V)<\infty$. Let us take ``small'' $\eps_1\in[0,\eps_0]$ such that
\begin{equation*}
\lambda+C\eps_1 (1+\beta) (1+K+(\nu_0(V)\vee K))<1.
\end{equation*}
It is possible to find such $\eps_1$ because $\lambda<1$. Let us prove that for any $0\le\eps\le\eps_1$ the strong solution of SMVE \eqref{SMV} has a unique invariant measure.

Denote by $\mathcal{P}_V(\mathbb{R}^d)$ the space of all probability measures on $(\mathbb{R}^d,\mathcal{B}(\mathbb{R}^d))$ which integrate $V$. Let $(X^{\eps,\nu}_t,\nu_t^\eps)_{t\ge0}$ be the strong solution of \eqref{SMV} with initial condition distributed as $\nu_0$. Consider the sequence of measures $(\nu^\eps_{nT})_{n\in\mathbb{Z}_+}$, where $\eps\in[0,\eps_1]$. We claim that this sequence is a Cauchy sequence in the metric space $(\mathcal{P}_V(\mathbb{R}^d),d_{1+\beta V})$. Indeed, for any
$m,n\in\mathbb{Z}_+$ we have
\begin{align*}
d_{1+\beta V}(\nu^\eps_{(n+m)T},\nu^\eps_{nT})&= d_{1+\beta V} (P^{nT}_{\nu_{mT}^\eps}(\eps)\nu_{mT}^\eps,
P^{nT}_{\nu_0}(\eps)\nu_0)\\
&\le \theta(\eps,\nu_0)^n d_{1+\beta V} (\nu_0, \nu_{mT}^\eps)\\
&\le \theta(\eps,\nu_0)^n (2+\beta\nu_0(V)+\beta\nu_{mT}^\eps (V)) \le C_1 \theta(\eps,\nu_0)^n,
\end{align*}
where $C_1=2+2\beta \nu_0 (V)+\beta K$; in the second inequality we applied \eqref{wdtv_okon}; and in the last inequality we used \eqref{est_mera}. Since $\theta(\eps,\nu_0)\!\le\theta(\eps_1,\nu_0)\!<\!1$, we get ${d_{1+\beta V}(\nu^\eps_{(n+m)T},\nu^\eps_{nT})\!\to0}$ as $n,m\to\infty$.

The space $(\mathcal{P}_V(\mathbb{R}^d),d_{1+\beta V})$ is complete; hence there exists a measure
$\pi^\eps\in \mathcal{P}_V(\mathbb{R}^d)$ such that $d_{1+\beta V}(\nu^\eps_{nT},\pi^\eps)\to 0$ as $n\to\infty$. Arguing as in the proof of Theorem~\ref{Th:DiscreteTime}(i) and applying Lemma~\ref{L:svoistvo_ravnomerki}, we see that the measure $\pi^\eps$ is a unique invariant measure
of the nonlinear operator $P_{\sbt}^T(\eps) \sbt$. In other words,
$P_{\pi^\eps}^T(\eps) \pi^\eps=\pi^\eps$ and if $P_{\nu}^T(\eps) \nu=\nu$ for a measure $\nu\in
\mathcal{P}(\mathbb{R}^d)$, then $\nu=\pi^\eps$.

Let us verify that the measure $\pi^\eps$ is invariant for solutions of SMVE \eqref{SMV}. To do this it is sufficient to check that for any $t\ge0$ the measures $\pi^\eps_t:=P_{\pi^\eps}^t(\eps) \pi^\eps$ and $\pi^\eps$ are equal. Assume the converse. Let $\pi^\eps_t\neq \pi^\eps$ for some  $t>0$. Since $\pi_T^\eps=P_{\pi^\eps}^T(\eps) \pi^\eps=\pi^\eps$, we derive
\begin{equation*}
P_{\pi^\eps_t}^T(\eps) \pi^\eps_t=P_{\pi^\eps}^{T+t}(\eps) \pi^\eps=P_{\pi^\eps_T}^t(\eps) \pi^\eps_T=
P_{\pi^\eps}^t(\eps) \pi^\eps=\pi^\eps_t.
\end{equation*}
Consequently, the nonlinear operator $P_{\sbt}^T(\eps) \sbt${ } has two different invariant measures (namely,
$\pi^\eps$ and $\pi^\eps_t$). By the above, this is impossible. Hence, $\pi^\eps_t= \pi^\eps$. Thus, the measure $\pi^\eps$ is a unique invariant measure of \eqref{SMV}.

Finally let us establish the convergence rate \eqref{otsenka}. Consider a measure $\mu_0\in\mathcal{P}(\mathbb{R}^d)$ such that $I(\mu_0)=\int_{\mathbb{R}^d} e^x \,\mu_0(dx)<\infty$. Let 
$(X^{\eps,\mu}_t,\mu_t^\eps)_{t\ge0}$ be a strong solution of SMVE \eqref{SMV} with initial condition distributed as $\mu_0$. We make use of \eqref{wdtv_okon} to obtain for any $t>0$
\begin{align*}
d_{TV}(\mu_t^\eps,\pi^\eps)&\le d_{1+\beta V}(\mu_t^\eps,\pi^\eps)= d_{1+\beta V}
(P^{[t/T]T}_{\mu_{\{t/T\}T}^\eps} (\eps) \mu_{\{t/T\}T}^\eps, P^{[t/T]T}_{\pi^\eps}(\eps)\pi^\eps)\\
&\le \theta(\eps,\pi^\eps)^{[t/T]} d_{1+\beta V}(\mu_{\{t/T\}T}^\eps,\pi^\eps)\\
&\le \theta(\eps,\pi^\eps)^{t/T-1}
(2+\beta I(\mu_0)+(1+\beta) K).
\end{align*}
Here $[\cdot]$ and $\{\cdot\}$  are the fractional and integer parts of a real number, respectively. To complete the proof, it remains to note that Lemma~\ref{L:svoistvo_ravnomerki} implies $\pi^\eps(V)\le K$. Since $\eps\le \eps_1$, we see that $\theta(\eps,\pi^\eps)<1$. This yields \eqref{otsenka}.
\end{proof}

\bigskip

\textbf{Acknowledgements}. The author is grateful to Professor A.V.~Bulinski and Professor A.Yu.~Veretennikov
for posing the problem, help, and constant attention to this work. The author also would like to thank Professor A.M. Kulik for useful discussions. This paper was partially written during the author's stay at ICMS --- International Centre for Mathematical Sciences (Edinburgh, UK). The author is grateful to ICMS for their support and hospitality.

\newpage

\end{document}